\documentclass[a4paper,11pt]{article}

\usepackage{amsmath,amsthm,amssymb,mathrsfs,latexsym,amsfonts}
\usepackage{graphicx,psfrag,epsfig}
\usepackage{bbm}
\usepackage{a4wide}
\usepackage[english]{babel}
\usepackage[latin1]{inputenc}
\usepackage{authblk}

\numberwithin{equation}{section}

\newtheorem{theorem}{Theorem}[section]
\newtheorem{lemma}[theorem]{Lemma}
\newtheorem{proposition}[theorem]{Proposition}

\newcounter{paraga}[section]

\newcommand{\N}{\mathbb{N}}
\newcommand{\Z}{\mathbb{Z}}
\newcommand{\Q}{\mathbb{Q}}
\newcommand{\R}{\mathbb{R}}
\newcommand{\C}{\mathbb{C}}

\begin{document}

\def\MP{\,{<\hspace{-.5em}\cdot}\,}
\def\SP{\,{>\hspace{-.3em}\cdot}\,}
\def\PM{\,{\cdot\hspace{-.3em}<}\,}
\def\PS{\,{\cdot\hspace{-.3em}>}\,}
\def\EP{\,{=\hspace{-.2em}\cdot}\,}
\def\PP{\,{+\hspace{-.1em}\cdot}\,}
\def\PE{\,{\cdot\hspace{-.2em}=}\,}
\def\N{\mathbb N}
\def\C{\mathbb C}
\def\Q{\mathbb Q}
\def\R{\mathbb R}
\def\T{\mathbb T}
\def\A{\mathbb A}
\def\Z{\mathbb Z}
\def\demi{\frac{1}{2}}

\begin{titlepage}
\author{Abed Bounemoura\footnote{CNRS - CEREMADE, Université Paris Dauphine \& IMCCE, Observatoire de Paris} {} and St\'ephane Fischler\footnote{Laboratoire de math\'ematiques d'Orsay, Univ Paris Sud, 91405 Orsay Cedex, France}}
\title{\LARGE{\textbf{The classical KAM theorem for Hamiltonian systems via rational approximations}}}
\end{titlepage}

\maketitle

\begin{abstract}
In this paper, we give a new proof of the classical KAM theorem on the persistence of an invariant quasi-periodic torus, whose frequency vector satisfies the Bruno-Rüssmann condition, in real-analytic non-degenerate Hamiltonian systems close to integrable. The proof, which uses rational approximations instead of small divisors estimates, is an adaptation to the Hamiltonian setting of the method we introduced in \cite{BF12} for perturbations of constant vector fields on the torus. 
\end{abstract}

\section{Introduction}\label{s1}

In this paper, we consider small perturbations of integrable Hamiltonian systems, which are defined by a Hamiltonian function of the form
\[ H(p,q)=h(p)+\epsilon f(p,q), \quad (p,q) \in \R^n \times \T^n, \quad 0 \leq \epsilon <1, \]
where $n\geq 2$ is an integer and $\T^n=\R^n / \Z^n$: the Hamiltonian system associated to this Hamiltonian function is then given by
\begin{equation*}
\begin{cases}
\dot{p}=- \partial_q H(p,q)=-\epsilon \partial_q f(p,q),  \\
\dot{q}=\partial_p H(p,q)=\nabla h (p)+\epsilon \partial_p f(p,q).
\end{cases}
\end{equation*}
When $\epsilon=0$, the system associated to $H=h$ is trivially integrable: all solutions are given by
\[ (p(t),q(t))=(p(0),q(0)+t \nabla h (p(0)) \; [\Z^n]), \] 
and therefore the sets $\mathcal{T}_{p_0}=\{p_0\} \times \T^n$, $p_0 \in \R^n$, are invariant tori on which the dynamics is quasi-periodic with frequency $\omega_0=\nabla h(p_0) \in \R^n$.

Now for $\epsilon >0$, the system is in general no longer integrable, and one is interested to know whether such quasi-periodic solutions persist under an arbitrary small perturbation. It is not hard to see that if the frequency $\omega_0 \in \R^n$ is resonant, that is if there exists a vector $k\in \Z^n\setminus \{0\}$ such that $k\cdot \omega_0=0$, then the torus $\mathcal{T}_{p_0}$ is destroyed by a general perturbation. This goes back to Poincaré, who initiated the study of small perturbations of integrable Hamiltonian systems in his seminal work on Celestial Mechanics.

\subsection{K for Kolmogorov}\label{s11}

The fate of quasi-periodic solutions with non-resonant frequencies remained an open question for more than half a century, until it was solved by Kolmogorov. In \cite{Kol54}, he proved that if $H$ is real-analytic and if $h$ is non-degenerate at some point $p_0 \in \R^n$ in the sense that $\nabla h$ is a local diffeomorphism at $p_0$, then, provided $\omega_0=\nabla h (p_0)$ satisfies a classical Diophantine condition, the torus $\mathcal{T}_{p_0}$ survives an arbitrary small perturbation. The condition on $\omega_0$ is a strengthening of the non-resonance condition, namely one requires the existence of constants $\gamma>0$ and $\tau \geq n-1$ such that for all $k\in \Z^n\setminus\{0\}$,
\[ |k \cdot \omega_0| \geq \gamma |k|_1^{-\tau}, \] 
where $|k|_1=|k_1|+\cdots+|k_n|$ (this is a generalization of a condition introduced by Siegel in \cite{Sie42} for the problem of linearization of a one-dimensional holomorphic map at an elliptic fixed point). Whereas classical Hamiltonian perturbation theory, as pioneered by Poincaré, only yields formal quasi-periodic solutions through an iterative procedure which may or may not converge, Kolmogorov's main idea was to focus on such a strongly non-resonant torus in order to use a modified and rapidly converging inductive scheme, similar to a Newton method, leading to the persistence of this torus. 

\subsection{A and M for Arnold and Moser}\label{s12}

Kolmogorov's fundamental theorem was later revisited and improved by Arnold and Moser, leading to what is known as KAM theory.

In \cite{Arn63a}, Arnold gave a more detailed and technically different proof, under a different non-degeneracy assumption on the integrable Hamiltonian, and in \cite{Arn63b}, he further improved the non-degeneracy assumption in order to apply the theorem to Celestial Mechanics.

In the meantime, in \cite{Mos62}, Moser was able to replace the analyticity condition on the Hamiltonian by a mere finite differentiability condition, in the related context of invariant curves of area-preserving maps of the annulus. Moreover, in \cite{Mos67}, he introduced a powerful formalism for the perturbation theory (not necessarily Hamiltonian) of quasi-periodic solutions, which eventually led to many applications (see \cite{BHS96} for instance).

\subsection{R for Rüssmann}\label{s13}

Further important contributions to KAM theory are due to Rüssmann. Indeed, following works of Arnold and Pyartli, Rüssmann was able to find the most general non-degeneracy condition for the integrable Hamiltonian $h$: in the analytic case, it is sufficient to require that locally, the image of the gradient map $\nabla h$ is not contained in any hyperplane of $\R^n$ (it is also necessary as shown by Sevryuk in \cite{Sev95}). This was announced in \cite{Rus89}, and details are given in \cite{Rus01}. Also, he was able to greatly relax the condition imposed on the frequency, going beyond the classical Diophantine condition (see \cite{Rus94}, \cite{Rus01}). This condition, which generalizes a condition introduced by Bruno in the context of holomorphic linearization (\cite{Bru71},\cite{Bru72}), is now usually called Bruno-Rüssmann condition (see \S\ref{s22} for a definition), and is known to be optimal in one-dimensional problems following works of Yoccoz (see \cite{Yoc95}, \cite{Yoc02}). This extension to more general frequency vectors also led to a different method of proof, in which no rapid convergence is involved (\cite{Rus94}, \cite{Rus01}, see also \cite{Rus10} for the latest improvement of this method).  

\subsection{Other approaches to the classical KAM theorem}\label{s14}

Apart from Rüssmann's modified iterative scheme, a number of other proofs have appeared in the literature.

First, the classical iterative scheme of Kolmogorov has been replaced by the use of an adapted implicit function theorem in a scale of Banach spaces (or in a Fréchet space), following works of Zehnder (\cite{Zeh75}, \cite{Zeh76}), Herman (\cite{Bos86}) and more recently Féjoz (\cite{Fej12}).

Another proof, based on the Lagrangian formalism and that avoids any coordinate transformation, was presented in \cite{LM01} in the case of invariant curves for area-preserving maps of the annulus, and in \cite{SZ89} for Hamiltonian systems in any number of degrees of freedom. 

But perhaps the most striking proof is due to Eliasson. The classical theorem of Kolmogorov shows, a posteriori and in an indirect way, that some formal solutions of classical perturbation theory do converge. In \cite{Eli96}, Eliasson managed to prove directly the convergence of these formal solutions, by adding suitable terms in the formal series in order to exhibit subtle cancellations yielding the absolute convergence.  

At last, we should also point out that using a multi-dimensional continued fraction algorithm due to Lagarias, Khanin, Lopes Dias and Marklof gave a proof of the KAM theorem with techniques closer to renormalization theory (see \cite{KDM07} for the case of constant vector fields on the torus, and \cite{KDM06} for the case of Hamiltonian systems).

\subsection{Approach via rational approximations}\label{s15}

The purpose of this article, which can be considered as a continuation of our previous work \cite{BF12}, is to present yet another proof of the classical KAM theorem for Hamiltonian systems, which differs qualitatively from all other existing proofs as it does not involve any small divisors estimates and Fourier series expansions.

First we should recall that in the classical approach to the KAM theorem, as well as the other methods of proof we just mentioned, a central role is played by the following equation:
\begin{equation}\label{eqlin}
\mathcal{L}_{\omega_0} g=f-[f], \quad [f]=\int_{\T^n}f(\theta)d\theta,
\end{equation}
where $g$ (respectively $f$) is the unknown (respectively known) smooth function on $\T^n$, $\omega_0$ is non-resonant and $\mathcal{L}_{\omega_0}$ is the derivative in the direction of $\omega_0$. It is precisely in trying to solve Equation~\eqref{eqlin} that small divisors arise: geometrically, one needs to integrate along the integral curves $t \mapsto \theta +t \omega_0 \, [\Z^n]$, and these curves are not closed (they densely fill the torus). Analytically, one needs to invert the operator $\mathcal{L}_{\omega_0}$ acting on the space of smooth functions, and this operator is unbounded. Indeed, this operator can be diagonalized in a Fourier basis: letting $e_k(\theta)=e^{2\pi ik\cdot\theta}$ for $k\in\Z^n$ and expanding in Fourier series $g=\sum_{k\in\Z^n}g_k e_k$ and $f=\sum_{k\in\Z^n}f_k e_k$, the solution is given by
\[ g_0=0, \quad g_k=(2\pi ik\cdot\omega_0)^{-1}f_k, \quad k\in \Z^n\setminus\{0\}.  \]
The quantities $k\cdot\omega_0$, which enter in the denominators, can be arbitrarily small if the norm $|k|_1$ is arbitrarily large: these are called the small divisors, and are the main source of complications.

In this article, we will show how one can, using rational approximations, avoid solving Equation~\eqref{eqlin} and therefore avoid facing small divisors. Assume, without loss of generality, that the first component of $\omega_0$ has been normalized to one. Using a Diophantine result deduced in~\cite{BF12} from classical properties of geometry of numbers, we will approximate $\omega_0$ by $n$ rational vectors $v_1,\dots,v_n \in \Q^n$, with a control on their denominators $q_1,\dots,q_n$ in terms of the quality of the approximation, and such that the integer vectors $q_1v_1,\dots,q_nv_n$ form a $\Z$-basis of $\Z^n$. This result will enable us to replace the study of Equation~\eqref{eqlin} with the study of the equations
\begin{equation}\label{eqlin2}
\mathcal{L}_{v_j} g_j=f_j-[f_j]_{v_j}, \quad [f_j]_{v_j}(\theta)=\int_{0}^{1}f(\theta+tq_jv_j)dt, \quad 1 \leq j \leq n,
\end{equation}
where the $f_j$ are defined inductively by $f_1=f$ and $f_{j+1}=[f_j]_{v_j}$ for $1 \leq j \leq n-1$, and the $g_j$ are the unknown. Equations~\eqref{eqlin2} are much simpler than Equation~\eqref{eqlin}, they can be solved without Fourier expansions by the following simple integral formula
\[ g_j(\theta)=q_j\int_{0}^{1}(f_j-[f_j]_{v_j})(\theta+tq_jv_j)tdt \]
and there are no small divisors: geometrically, the integral curves $t \mapsto \theta +tv_j \, [\Z^n]$ are $q_j$-periodic hence closed, and analytically, the inverse operator of $\mathcal{L}_{v_j}$ is bounded (by $q_j$, with respect to any translation-invariant norm on the space of functions on the torus).   

In \cite{BF12}, this approach was already used in the model problem of perturbations of constant vector fields on the torus; the aim of this article is therefore to explain how to adapt the arguments of~\cite{BF12} to the context of real-analytic non-degenerate Hamiltonian systems close to integrable.

\bigskip

\noindent{\bf Acknowledgements: } The first author would like to thank the organizers of the conference ``Geometry, Dynamics, Integrable Systems - GDIS 2013" in Izhevsk, Russia, for the invitation to give a talk on this topic, and IMPA, Rio de Janeiro, Brazil, for its hospitality during the preparation of this work. The second author is partially supported by Agence Nationale de la Recherche (project HAMOT, ref. ANR 2010 BLAN-0115), and thanks Michel Waldschmidt for his advice.

\section{Statements}\label{s2}

As explained in the Introduction, the result that we will prove here is not new, only the method of proof is. For convenience, we will follow the very nice survey \cite{Pos01} for the exposition of the statements. Compared to \cite{Pos01}, we decided for simplicity to focus on the persistence of a single invariant torus instead of a Cantor family of invariant tori; on the other hand, our frequency will be assumed to satisfy the Bruno-Rüssmann condition, which is more general than the classical Diophantine condition.

\subsection{Setting}\label{s21}

Recall that $n\geq 2$ is an integer, $\T^n=\R^n / \Z^n$ and let $D \subseteq \R^n$ be an open domain containing the origin. For a small parameter $\epsilon\geq 0$, we consider a Hamiltonian function $H : D \times \T^n  \rightarrow \R$ of the form
\begin{equation}\label{Ham1}
\begin{cases}
H(p,q)=h(p)+\epsilon f(p,q),  \\
\nabla h (0)=\omega_0=(1,\bar{\omega}_0) \in \R^n, \quad \bar{\omega}_0 \in [-1,1]^{n-1}. 
\end{cases}\tag{$*$}
\end{equation}
Assuming that the vector $\nabla h (0)=\omega_0$ is non-zero, it can always be written as above, re-ordering its components and re-scaling the Hamiltonian if necessary. The integrable Hamiltonian $h$ is said to be \textit{non-degenerate} at the origin if the map $\nabla h : D \rightarrow \R^n$ is a local diffeomorphism at the origin. Upon restricting $D$ if necessary, we may assume that $\nabla h$ is actually a global diffeomorphism. The Hamiltonian $H$ is said to be \textit{real-analytic} on $\bar{D} \times \T^n$, where $\bar{D}$ denotes the closure of $D$ in $\R^n$, if it is analytic on a fixed (that is, independent of $\epsilon$) neighborhood of $\bar{D} \times \T^n$ in $\R^n \times \T^n$. This implies that $H$ can be extended as a holomorphic function on a fixed complex neighborhood of $\bar{D} \times \T^n$ in $\C^n \times \T_{\C}^n$, where $\T_{\C}^n=\C^n / \Z^n$, which is real-valued for real arguments.  

\subsection{Bruno-Rüssmann condition}\label{s22}

For $Q\geq 1$, let us define the function $\Psi=\Psi_{\omega_0}$ by
\begin{equation}\label{eqpsi}
\Psi(Q)=\sup\{|k\cdot\omega_0|^{-1}\; | \; k \in \Z^n, \; 0 < |k|_1\leq Q\} \in [1,+\infty].
\end{equation}
Recall that the vector $\omega_0$ is said to be non-resonant if $k\cdot \omega_0=0$ implies $k=0 \in \Z^n$, which is equivalent to $\Psi(Q)$ being finite for all $Q\geq 1$. We say that $\omega_0$ satisfies the Bruno-Rüssmann condition if it is non-resonant and
\begin{equation}\label{BR}
\int_{1}^{+\infty}Q^{-2}\ln(\Psi(Q))dQ < \infty. \tag{BR}
\end{equation}

Now let us define two other functions $\Delta=\Delta_{\omega_0}$ and $\Delta^*=\Delta^*_{\omega_0}$ by $\Delta(Q)=Q\Psi(Q)$ for $Q \geq 1$, and $\Delta^*(x)=\sup\{Q \geq 1\; | \; \Delta(Q)\leq x\}$ for $x \geq \Delta(1)$. We obviously have $\Delta^*(\Delta(Q))=Q$ and $\Delta(\Delta^*(x)) \leq x$. Moreover, it is not hard to check (see \cite{BF12}, Appendix A) that $\Psi$ satisfies~\eqref{BR} if and only if $\Delta^*$ satisfies
\begin{equation}\label{BR2}
\int_{\Delta (1)}^{+\infty}(x\Delta^*(x))^{-1}dx < \infty.
\end{equation}

\subsection{Classical KAM theorem}\label{s23}

Consider the map $\Theta_0 : \T^n \rightarrow D \times \T^n$ given by $\Theta_0(q)=(0,q)$: this is a real-analytic torus embedding such that $\Theta_0(\T^n)$ is invariant by the Hamiltonian flow of $H_0$ and quasi-periodic with frequency $\omega_0$. The classical KAM theorem states that this invariant quasi-periodic torus is preserved, being only slightly deformed, by an arbitrary small perturbation, provided $h$ is non-degenerate, $H$ analytic and $\omega_0$ satisfies the Bruno-Rüssmann condition. 

\begin{theorem}\label{thm1}
Let $H$ be as in~\eqref{Ham1}, with $h$ non-degenerate and $H$ real-analytic, and assume that $\Psi=\Psi_{\omega_0}$ satisfies~\eqref{BR}. For $\epsilon$ small enough, there exists a real-analytic torus embedding $\Theta_{\omega_0} : \T^n \rightarrow D \times \T^n$ such that $\Theta_{\omega_0}(\T^n)$ is invariant by the Hamiltonian flow of $H$ and quasi-periodic with frequency $\omega_0$. 

Moreover, $\Theta_{\omega_0}$ converges uniformly to $\Theta_0$ as $\epsilon$ goes to zero.
\end{theorem}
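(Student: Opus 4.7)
The plan is to execute a Kolmogorov-type Newton iteration producing a sequence of symplectic transformations whose infinite composition conjugates $H$ to a Hamiltonian in Kolmogorov normal form around $\{p=0\}$, thereby exhibiting $\Theta_{\omega_0}$. The novelty, following \cite{BF12}, is that the usual cohomological equation \eqref{eqlin} along $\omega_0$ is replaced at each step by the successive resolution of the $n$ equations \eqref{eqlin2}, each along a rational direction and each admitting a bounded solution given by the explicit integral formula displayed in the introduction.

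At step $k$ the Hamiltonian will have the Kolmogorov shape $H_k(p,q) = c_k + \omega_0\cdot p + Q_k(p) + \epsilon_k f_k(p,q)$ on a complex neighborhood $W_k$ of $\{0\}\times\T^n$ with analyticity radii $(r_k,s_k)$ decreasing to positive limits. One picks a cutoff $Q^{(k)}$ and applies the Diophantine lemma of \cite{BF12} to produce vectors $v_1^{(k)},\dots,v_n^{(k)} \in \Q^n$ with denominators $q_j^{(k)} \leq Q^{(k)}$ such that the integer vectors $q_j^{(k)} v_j^{(k)}$ form a $\Z$-basis of $\Z^n$ and each $v_j^{(k)}$ approximates $\omega_0$ with accuracy controlled by $\Psi(Q^{(k)})^{-1}$. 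One then runs $n$ sub-steps: in sub-step $j$ the transport equation $\mathcal{L}_{v_j^{(k)}} g_j = f_j - [f_j]_{v_j^{(k)}}$ is solved explicitly with $\|g_j\|$ controlled by $q_j^{(k)} \|f_j\|$, and the Hamiltonian time-one flow $\Phi_j^{(k)}$ of $g_j$ is used as a near-identity symplectic change of variables. Because the $q_j^{(k)} v_j^{(k)}$ span $\Z^n$, after $n$ sub-steps the composite $\Phi^{(k)} = \Phi_1^{(k)} \circ \cdots \circ \Phi_n^{(k)}$ kills every non-zero Fourier mode of the obstructing terms $f_k(0,q)$ and $p\cdot\partial_p f_k(0,q)$ up to quadratic error. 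Finally, the non-degeneracy of $h$, via the inverse function theorem, supplies a small translation in $p$ restoring $\omega_0$ as the frequency at $p=0$ of the new Hamiltonian $H_{k+1} = H_k \circ \Phi^{(k)}$.

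Standard Cauchy estimates on the Taylor remainders after conjugation then yield a recursion of the schematic shape
$$\epsilon_{k+1} \leq C\,(Q^{(k)})^{A}\,(r_k-r_{k+1})^{-B}\,\epsilon_k^{2} + C\,\Psi(Q^{(k)})^{-1}\,\epsilon_k,$$
in which the first term is the usual Newton quadratic gain and the second accounts for the fact that the equations \eqref{eqlin2} are solved along $v_j^{(k)}$ rather than $\omega_0$. Choosing $Q^{(k)}$ so as to balance the two contributions (essentially $Q^{(k)} = \Delta^*(\epsilon_k^{-1})$) and letting $r_k - r_{k+1}$ and $s_k - s_{k+1}$ decay geometrically, one obtains super-exponentially fast decay of $\epsilon_k$ on a non-trivial limiting neighborhood provided the cumulative analyticity loss $\sum_k \ln(Q^{(k)})\cdot 2^{-k}$ converges; this is exactly the content of the Bruno-R\"ossmann condition in the form \eqref{BR2}, and the bookkeeping runs parallel to the one already carried out in \cite{BF12}. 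The infinite composition $\Phi^{(0)}\circ\Phi^{(1)}\circ\cdots$ then converges uniformly on a common complex neighborhood of $\{0\}\times\T^n$ to a real-analytic symplectic embedding, whose restriction to $\{p=0\}$ is the desired $\Theta_{\omega_0}$; uniform closeness to $\Theta_0$ as $\epsilon \to 0$ is automatic because each $\Phi^{(k)}$ is $O(\epsilon_k)$-close to the identity.

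The main difficulty, and the heart of the adaptation from \cite{BF12}, is the inner loop of $n$ sub-steps. One must show that the error caused by using $\mathcal{L}_{v_j^{(k)}}$ in place of $\mathcal{L}_{\omega_0}$ can genuinely be absorbed in the quadratic remainder with the above choice of $Q^{(k)}$, and simultaneously that the successive symplectic conjugations propagate cleanly through the changing integrable part $Q_k$ and the action shift dictated by the non-degeneracy of $h$. Unlike the toral vector-field setting of \cite{BF12}, here the perturbation, the transformation and the unperturbed frequency are coupled at every sub-step, so the analyticity margin $r_k - r_{k+1}$ must be split among the $n$ sub-steps in a way that keeps all three quantities under simultaneous control.
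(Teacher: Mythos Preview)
Your outline follows a genuinely different route from the paper. The paper does \emph{not} iterate in Kolmogorov normal form; instead it first recasts the problem in the Moser--P\"oschel ``parameters'' framework (their Theorem~\ref{thm2}): one introduces the frequency $\omega$ as an independent parameter, so that the integrable part is simply $e(\omega)+\omega\cdot I$, and the quadratic part of $h$ is thrown into the perturbation. The non-degeneracy of $h$ is then invoked exactly once, at the very end, to find the action shift $\tilde I$ with $\nabla h(\tilde I)=\tilde\omega$, whereas you propose to invoke it at every step. Moreover the paper's iteration is \emph{linear}, not quadratic: each KAM step (Proposition~\ref{kamstep}) yields $|P^+|\le (\eta/8)\varepsilon$ with a fixed ratio, in the spirit of R\"ussmann, and the Bruno--R\"ussmann condition enters only through the summability of the widths $\sigma_i\sim Q_i^{-1}$. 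Your proposed Newton scheme with super-exponential decay and the balancing choice $Q^{(k)}=\Delta^*(\epsilon_k^{-1})$ is a different convergence mechanism; it may well be workable, but it is not what the paper does.

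There is, however, a concrete error in your use of the Diophantine lemma. You state that the denominators satisfy $q_j^{(k)}\le Q^{(k)}$ and that the approximation accuracy is controlled by $\Psi(Q^{(k)})^{-1}$. This is backwards: Proposition~\ref{dio} gives $q_j\MP\Psi(Q)$ and $|\omega_0-v_j|\MP (q_jQ)^{-1}$. Consequently the generating functions $g_j$, obtained from the integral formula, obey $\|g_j\|\le q_j\|f_j\|\MP\Psi(Q)\,\varepsilon$ (not $Q\,\varepsilon$), and the residual term coming from $(\omega_0-v_j)\cdot\partial_\theta g_j$ is of order $(Q\sigma)^{-1}\varepsilon$ (see the paper's estimate~\eqref{estim1}), not $\Psi(Q)^{-1}\varepsilon$. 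With your stated bounds the iteration would be too easy to close and would not single out the Bruno--R\"ussmann condition; with the correct bounds, $\Psi(Q)$ governs the size of the coordinate change while $(Q\sigma)^{-1}$ governs the linear leftover, and your schematic recursion and the subsequent balancing argument need to be rewritten accordingly before one can assess convergence.
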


As in \cite{Pos01}, Theorem~\ref{thm1} will be deduced from a KAM theorem for a Hamiltonian ``with parameters", for which a quantitative statement is given below.

\subsection{KAM theorem with parameters}\label{s24}

Let us now consider a different setting. Given $r,s,h$ real numbers such that $0 \leq r \leq 1$, $0 \leq s \leq 1$, $0 \leq h \leq 1$, we let
\[ D_{r,s}=\{ I \in \C^n \; | \; |I|<r \} \times \{ \theta \in \T_{\C}^n=\C^n / \Z^n \; | \; |\mathrm{Im}(\theta)|<s \} \]
and 
\[ O_h=\{ \omega \in \C^n \; | \; |\omega-\omega_0|<h \}\]
be complex neighborhoods of respectively $\{0\} \times \T^n$ and $\omega_0$, where $|\,.\,|$ stands for the supremum norm of vectors.

For a small parameter $\varepsilon\geq 0$, consider a function $H$, which is bounded and real-analytic on $D_{r,s}\times O_h$, and of the form 
\begin{equation}\label{Ham2}
\begin{cases}
H(I,\theta,\omega)=N(I,\omega)+P(I,\theta,\omega), \\
N(I,\omega)=e(\omega)+\omega \cdot I, \quad |P|_{r,s,h}\leq\varepsilon,
\end{cases}
\tag{$**$}
\end{equation}
where 
\[|P|_{r,s,h}=\sup_{(I,\theta,\omega) \in D_{r,s}\times O_h}|P(I,\theta,\omega)|.\]
The function $H$ should be considered as a real-analytic Hamiltonian on $D_{r,s}$, depending analytically on a parameter $\omega \in O_h$; for a fixed parameter $\omega \in O_h$, when convenient, we will write 
\[ H_\omega (I,\theta)=H(I,\theta,\omega), \quad N_\omega(I)=N(I,\omega), \quad P_\omega(I,\theta)=P(I,\theta,\omega). \] 

Let $B=\{I \in \R^n \; | \; |I|<r\}$ so that $B \times \T^n$ is the real part of the domain $D_{r,s}$, and $\Phi_0 : \T^n \rightarrow B \times \T^n$ be the map given by $\Phi_0(\theta)=(0,\theta)$. Then $\Phi_0(\T^n)$ is an embedded real-analytic torus in $B \times \T^n$, invariant by the Hamiltonian flow of $N_{\omega_0}$ and quasi-periodic with frequency $\omega_0$. The next theorem states that this quasi-periodic torus will persist, being only slightly deformed, as an invariant torus not for the Hamiltonian flow of $H_{\omega_0}$ but for the Hamiltonian flow of $H_{\tilde{\omega}}$, where $\tilde{\omega}$ is a parameter close to $\omega_0$, provided $\varepsilon$ is sufficiently small and $\omega_0$ satisfies the Bruno-Rüssmann condition. Here is a more precise statement.

\begin{theorem}\label{thm2}
Let $H$ be as in~\eqref{Ham2}, with $\Delta^*=\Delta^*_{\omega_0}$ satisfying~\eqref{BR2}. Then there exist positive constants $c_1 \leq 1$, $c_2\leq 1$, $c_3 \leq 1$, $c_4 \geq 1$ and $c_5 \geq 1$ depending only on $n$ such that if
\begin{equation}\label{cond0}
\varepsilon r^{-1} \leq c_1 h \leq c_2 (Q_0\Psi(Q_0))^{-1}
\end{equation}
where $Q_0 \geq 1$ is sufficiently large so that
\begin{equation}\label{eqQ0}
Q_0^{-1}+(\ln 2)^{-1}\int_{\Delta(Q_0)}^{+\infty}\frac{dx}{x\Delta^*(x)} \leq c_3 s,
\end{equation}
there exist a real-analytic torus embedding $\Phi_{\omega_0} : \T^n \rightarrow B \times \T^n$ and a vector $\tilde{\omega} \in \R^n$ such that $\Phi_{\omega_0}(\T^n)$ is invariant by the Hamiltonian flow of $H_{\tilde{\omega}}$ and quasi-periodic with frequency $\omega_0$. Moreover, $\Phi_{\omega_0}$ is real-analytic on $\T^n_{s/2}=\{ \theta \in \T_{\C}^n \; | \; |\mathrm{Im}(\theta)|<s/2 \}$ and we have the estimates
\begin{equation}\label{estim0}
|W(\Phi_{\omega_0}-\Phi_0)|_{s/2} \leq c_4 \varepsilon (rh)^{-1}, \quad |\tilde{\omega}-\omega_0| \leq c_5\varepsilon r^{-1},
\end{equation}
where $W=\mathrm{Diag}(r^{-1}\mathrm{Id},Q_0^{-1}\mathrm{Id})$.
\end{theorem}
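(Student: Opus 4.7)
The plan is to run a Newton-type iteration in the spirit of Kolmogorov's original argument, but with the crucial twist from \cite{BF12}: at each step the standard cohomological equation $\mathcal{L}_{\omega_0}\chi=P-[P]$ is replaced by a cascade of $n$ equations of the form~\eqref{eqlin2}, each driven by a rational vector $v_j$ close to $\omega_0$. Since the Hamiltonian depends on a parameter $\omega$, the frequency adjustment at each step reduces to a near-identity implicit function argument, so the main analytic work is concentrated in a single KAM cycle and in controlling the accumulated error over infinitely many iterations.

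One KAM cycle would proceed as follows. Given data $N+P$ with $|P|_{r,s,h}\leq\varepsilon$, pick a scale $Q\geq 1$ and apply the Diophantine result of \cite{BF12} to obtain rational vectors $v_1,\dots,v_n\in\Q^n$ with denominators $q_j\leq Q$ such that the vectors $q_jv_j$ form a $\Z$-basis of $\Z^n$, with $|\omega_0-v_j|$ of order $(q_jQ)^{-1}$ and $q_1\cdots q_n$ bounded by a constant times $\Psi(Q)$. Setting $f_1=P$ and $f_{j+1}=[f_j]_{v_j}$, solve each equation $\mathcal{L}_{v_j}g_j=f_j-[f_j]_{v_j}$ by the explicit integral formula, with $|g_j|$ controlled by $q_j|f_j|$. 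Composing the time-one Hamiltonian flows generated by $g_1,\dots,g_n$ yields a symplectic map that conjugates $N+P$ modulo the surviving average $[f_n]_{v_n}$ and modulo residual terms of the form $(\omega-v_j)\cdot\partial_\theta g_j$ that arise from substituting $v_j$ for the true frequency $\omega$. Cauchy estimates on shrinking analyticity strips then give a new perturbation of size of order $\varepsilon^2$ times a divisor factor controlled by $\Psi(Q)$, while the $I$-linear part of $[f_n]_{v_n}$ drifts the frequency by an amount of size $\varepsilon/r$, absorbed into a near-identity parameter shift $\omega\mapsto\omega'$ that is inverted by the implicit function theorem thanks to the width $h$.

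For the iteration, pick decreasing sequences $r_k,s_k,h_k,\varepsilon_k$ with $s_k\downarrow s/2$ and $\varepsilon_k$ decaying superexponentially, and at step $k$ take $Q_k$ of order $\Delta^*(1/\sigma_k)$, with $\sigma_k=s_k-s_{k+1}$. The loss of analyticity at step $k$ then matches the factor $Q_k\Psi(Q_k)$ produced by the $k$-th transformation, and convergence of the scheme reduces to summability of $\sum_k\sigma_k\ln\Psi(Q_k)$; via the change of variable $x=\Delta(Q_k)$ this is exactly what the integrability condition~\eqref{BR2} guarantees. The composition of successive transformations, applied to $\Phi_0$, converges on $\T^n_{s/2}$ to the desired embedding $\Phi_{\omega_0}$, and the sequence of parameters converges to the claimed $\tilde\omega$. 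The quantitative bounds~\eqref{estim0} follow from carrying constants through the induction, which is made possible by the smallness condition~\eqref{cond0}.

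The principal obstacle, as already in \cite{BF12}, is the internal bookkeeping of the $n$ sub-steps inside a single KAM cycle. Each $g_j$ only removes the $v_j$-component of a partially reduced residual, and the subsequent flows act on expressions that have already been deformed by the previous ones; the challenge is to prove that, after composing all $n$ sub-steps, the new perturbation still scales like $\varepsilon^2$ with only a single divisor factor of order $\Psi(Q)$, rather than the naive product of the individual $q_j$. A secondary but nontrivial point is to tune the parameter width $h$ to the perturbation size $\varepsilon/r$ so that the frequency adjustment remains solvable at every iteration, which is precisely what dictates the interplay between $h$, $\varepsilon$ and $Q_0\Psi(Q_0)$ in~\eqref{cond0}.
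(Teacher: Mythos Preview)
Your outline captures the spirit of the rational-approximation method, but it contains a genuine miscalculation of the convergence rate that prevents the iteration from closing as you describe it. You correctly identify the residual terms $(\omega-v_j)\cdot\partial_\theta g_j$ arising from the frequency mismatch, yet you then assert that the new perturbation is ``of order $\varepsilon^2$ times a divisor factor''. Those very residual terms are \emph{linear} in $\varepsilon$: since $|g_j|\MP q_j\varepsilon$ and $|\omega-v_j|\MP (q_jQ)^{-1}$, a Cauchy estimate gives $(\omega-v_j)\cdot\partial_\theta g_j$ of size $(Q\sigma)^{-1}\varepsilon$, not $\varepsilon^2$. Consequently the scheme in the paper is only \emph{linearly} convergent: one KAM cycle shrinks $\varepsilon$ by a fixed factor $\eta/8$ (Proposition~\ref{kamstep}), and the sequences $\varepsilon_i,r_i,h_i$ in \S\ref{s43} are geometric, not superexponential. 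Convergence is obtained by choosing $Q_i$ so that $(Q_i\sigma_i)^{-1}$ stays bounded by a fixed small constant at every step; condition~\eqref{BR2} enters precisely to guarantee $\sum_i\sigma_i\leq s/2$ with $\sigma_i=CQ_i^{-1}$ and $Q_i=\Delta^*(2^i\Delta(Q_0))$. Your criterion ``summability of $\sum_k\sigma_k\ln\Psi(Q_k)$'' is not what is needed here.

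There are two further inaccuracies worth flagging. First, the Diophantine input (Proposition~\ref{dio}) gives $q_j\MP\Psi(Q)$ for each $j$ individually, not $q_j\leq Q$, and no bound on the product $q_1\cdots q_n$ is used. Second, the paper does not take $f_1=P$ but first replaces $P$ by its affine part $\bar P$ in $I$ at $I=0$; this truncation (controlled by Lemma~\ref{tech1} after shrinking $r$ to $\eta r$) forces the generating functions $F_j$ to be affine in $I$, so that the transformed Hamiltonian stays in the normal form $e(\omega)+\omega\cdot I$ plus a remainder. The quadratic-in-$I$ remainder $P-\bar P$ is the second source of the linear gain, alongside the frequency-mismatch term above; together they produce $|P^+|\leq\eta\varepsilon/8$ rather than anything of order $\varepsilon^2$.
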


Theorem~\ref{thm1} follows directly from Theorem~\ref{thm2}, introducing the frequencies $\omega = \nabla h (p)$ as independent parameters and compensating the shift of frequency $\tilde{\omega}-\omega_0$ using the non-degeneracy assumption on $h$. This deduction is classical but for completeness we will repeat the details below, following \cite{Pos01}.

\section{Proof of Theorem~\ref{thm1} assuming Theorem~\ref{thm2}}\label{s3}

In this section, we assume Theorem~\ref{thm2} and we show how it implies Theorem~\ref{thm1}. 

\begin{proof}[Proof of Theorem~\ref{thm1}]
For $p_0 \in D$, we expand $h$ in a small neighborhood of $p_0$: writing $p=p_0+I$ for $I$ close to zero, we get
\[ h(p)=h(p_0)+ \nabla h(p_0)\cdot I + \int_{0}^{1}(1-t)\nabla^2 h(p_0 +tI)I\cdot I dt. \]
Since $\nabla h : D \rightarrow \Omega$ is a diffeomorphism, instead of $p_0$ we can use $\omega=\nabla h(p_0)$ as a new variable, and we write
\[ h(p)=e(\omega)+ \omega\cdot I + P_h(I,\omega) \] 
with $e(\omega)=h(\nabla^{-1}(\omega))$ and $P_h(I,\omega)=\int_{0}^{1}(1-t)\nabla^2 h(\nabla^{-1}(\omega) +tI)I\cdot I dt$. Letting $\theta=q$ and 
\[ P_\epsilon(I,\theta,\omega)=\epsilon f(p,q)=\epsilon f(p_0+I,\theta)=\epsilon f(\nabla^{-1}(\omega)+I,\theta), \]
we can eventually define
\[ H(I,\theta,\omega)= H(p,q)= e(\omega)+ \omega\cdot I + P_h(I,\omega)+ P_\epsilon(I,\theta,\omega)= e(\omega)+ \omega\cdot I + P(I,\theta,\omega).\]
This Hamiltonian is obviously real-analytic in $(I,\theta,\omega)$, hence we can fix some small $0<s<1$ and $0<h<1$ so that $P$ is real-analytic on the complex domain $D_{r,s} \times O_h$ for all sufficiently small $0<r<1$. Moreover, as $\Psi=\Psi_{\omega_0}$ satisfies~\eqref{BR}, $\Delta^*=\Delta^*_{\omega_0}$ satisfies~\eqref{BR2}, and choosing $Q_0=Q_0(s)$ sufficiently large so that~\eqref{eqQ0} is satisfied, we may assume, restricting $h$ if necessary, that the second inequality of~\eqref{cond0} holds true, namely $c_1 h \leq c_2 (Q_0\Psi(Q_0))^{-1}$. In the same way, we may also assume that the real part of $O_h$ is contained in $\Omega$.

Now since $P(I,\theta,\omega)=P_h(I,\omega)+ P_\epsilon(I,\theta,\omega)$, we have
\[ |P|_{r,s,h} \leq \varepsilon=Mr^2 +F\epsilon \]
where
\[ M=\sup_{p \in D} |\nabla^2 h(p)|, \quad F=\sup_{(p,q) \in D \times \T^n} |f(p,q)|. \]
Therefore we choose $r=(M^{-1}F\epsilon)^{1/2}$ to have $\varepsilon=2F\epsilon$, and assuming $\epsilon \leq (4MF)^{-1} c_1^2 h^2$, we have
\[ \varepsilon r^{-1} = 2F\epsilon (MF^{-1}\epsilon^{-1})^{1/2}=2(MF\epsilon)^{1/2}  \leq c_1 h,\] 
hence the first inequality of~\eqref{cond0} is satisfied. So Theorem~\ref{thm2} can be applied: there exist a real-analytic torus embedding $\Phi_{\omega_0} : \T^n \rightarrow B \times \T^n$ and a vector $\tilde{\omega} \in \R^n$ such that $\Phi_{\omega_0}(\T^n)$ is invariant by the Hamiltonian flow of $H_{\tilde{\omega}}$ and quasi-periodic with frequency $\omega_0$. Moreover, $\Phi_{\omega_0}$ is real-analytic on $\T^n_{s/2}=\{ \theta \in \T_{\C}^n \; | \; |\mathrm{Im}(\theta)|<s/2 \}$ and we have the estimates
\begin{equation}\label{estim0f}
|W(\Phi_{\omega_0}-\Phi_0)|_{s/2} \leq c_4 \varepsilon (rh)^{-1}, \quad |\tilde{\omega}-\omega_0| \leq c_5\varepsilon r^{-1},
\end{equation}
where $W=\mathrm{Diag}(r^{-1}\mathrm{Id},Q_0^{-1}\mathrm{Id})$.

Since $\tilde{\omega}$ is real and the real part of $O_h$ is contained in $\Omega$, there exists $\tilde{I}$ close to zero such that $\nabla h (\tilde{I})=\tilde{\omega}$. Now observe that an orbit $(I(t),\theta(t))$ for the Hamiltonian $H_{\tilde{\omega}}$ corresponds to an orbit $(p(t),q(t))=(\tilde{I}+I(t),\theta(t))$ for our original Hamiltonian. Hence, if we define $T : B \times \T^n \rightarrow D \times \T^n$ by $T(I,\theta)=(\tilde{I}+I,\theta)$ and 
\[ \Theta_{\omega_0} = T \circ \Phi_{\omega_0} : \T^n \rightarrow D \times \T^n, \]
then $\Theta_{\omega_0}$ is a real-analytic torus embedding such that $\Theta_{\omega_0}(\T^n)$ is invariant by the Hamiltonian flow of $H$ and quasi-periodic with frequency $\omega_0$. 

Moreover, as $\epsilon$ goes to zero, it follows from~\eqref{estim0f} that $\Phi_{\omega_0}$ converges uniformly to $\Phi_0$ and $\tilde{\omega}$ converges to $\omega_0$, hence $T$ converges uniformly to the identity which eventually implies that $\Theta_{\omega_0}$ converges uniformly to $\Theta_0$.

\end{proof}

\section{Proof of Theorem~\ref{thm2}}\label{s4}

This section is devoted to the proof of Theorem~\ref{thm2}, in which we will construct, by an iterative procedure, a vector $\tilde{\omega}$ close to $\omega_0$ and a real-analytic torus embedding $\Phi_{\omega_0}$ whose image is invariant by the Hamiltonian flow of $H_{\tilde{\omega}}$. 
We start, in \S\ref{s41}, by recalling the Diophantine result of~\cite{BF12} which will be crucial in our approach. Then, in \S\ref{s42}, we describe an elementary step of our iterative procedure, and finally, in \S\ref{s43}, we will show one can perform infinitely many steps to obtain a convergent scheme.

In this section, for simplicity, we shall adopt the following notation: given positive real numbers $u$ and $v$, we will write $u \MP v$ (respectively $u \PM v$) if, for some constant $C\geq 1$ which depends only on $n$ and could be made explicit, we have $u\leq Cv$ (respectively $Cu \leq v$).

\subsection{Approximation by rational vectors}\label{s41}

Recall that we have written $\omega_0=(1,\bar{\omega}_0) \in \R^n$ with $\bar{\omega}_0 \in [-1,1]^{n-1}$. For a given $Q\geq 1$, it is always possible to find a rational vector $v=(1,p/q) \in \Q^n$, with $p \in \Z^{n-1}$ and $q \in \N$, which is a $Q$-approximation in the sense that $|q\omega_0 - qv|\leq Q^{-1}$, and for which the denominator $q$ satisfies the upper bound $q \leq Q^{n-1}$: this is essentially the content of Dirichlet's theorem on simultaneous rational approximations, and it holds true without any assumption on $\omega_0$. In our situation, since we have assumed that $\omega_0$ is non-resonant, it is not hard to see that there exist not only one, but $n$ linearly independent rational vectors in $\Q^n$ which are $Q$-approximations. Moreover, one can obtain not only linearly independent vectors, but rational vectors $v_1,\dots,v_n$ of denominators $q_1, \dots,q_n$ such that the associated integer vectors $q_1v_1,\dots,q_nv_n$ form a $\Z$-basis of $\Z^n$. However, the upper bound on the corresponding denominators $q_1, \dots, q_n$ is necessarily larger than $Q^{n-1}$, and is given by a function of $Q$ that we can call here $\Psi'=\Psi_{\omega_0}'$ (see \cite{BF12} for more precise and general information, but note that in this reference, $\Psi'$ was denoted by $\Psi$ and $\Psi$, which we defined in~\eqref{eqpsi}, was denoted by $\Psi'$). A consequence of the main Diophantine result of \cite{BF12} is that this function $\Psi'$ is in fact essentially equivalent to the function $\Psi$. 

\begin{proposition}\label{dio}
Let $\omega_0=(1,\bar{\omega}_0) \in \R^n$ be a non-resonant vector, with $\bar{\omega}_0 \in [-1,1]^{n-1}$. For any $1 \PM Q$, there exist $n$ rational vectors $v_1, \dots, v_n$, of denominators $q_1, \dots, q_n$, such that $q_1v_1, \dots, q_nv_n$ form a $\Z$-basis of $\Z^n$ and for $j\in\{1,\dots,n\}$,
\[ |\omega_0-v_j|\MP(q_j Q)^{-1}, \quad 1 \leq q_j \MP \Psi(Q).\]
\end{proposition}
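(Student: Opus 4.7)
The plan is to invoke the main Diophantine result of the authors' earlier paper \cite{BF12}, as foreshadowed in the paragraph preceding the statement. First I would introduce the auxiliary function $\Psi'=\Psi'_{\omega_0}$ sending $Q$ to the smallest $R\geq 1$ for which there exist $n$ rational $Q$-approximations $v_1,\ldots,v_n$ of $\omega_0$ whose associated integer vectors $q_1v_1,\ldots,q_nv_n$ form a $\Z$-basis of $\Z^n$ and whose denominators all satisfy $q_j\leq R$. The content of \cite{BF12} is the equivalence $\Psi'\asymp\Psi$ up to multiplicative constants depending only on $n$. Granting this, the proposition is almost immediate: for any sufficiently large $Q$, the definition of $\Psi'(Q)$ supplies the desired vectors $v_1,\ldots,v_n$; the $Q$-approximation property $|q_j\omega_0-q_jv_j|\leq Q^{-1}$ translates directly into $|\omega_0-v_j|\MP(q_jQ)^{-1}$; and the denominator bound becomes $q_j\leq\Psi'(Q)\MP\Psi(Q)$.

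So the real work would consist in justifying $\Psi'\asymp\Psi$. The easier inequality $\Psi\MP\Psi'$ is a standard geometry-of-numbers duality argument: given a $\Z$-basis $q_1v_1,\ldots,q_nv_n$ of $\Z^n$ with denominators bounded by $\Psi'(Q)$ and each $|q_j\omega_0-q_jv_j|\leq Q^{-1}$, one expands an arbitrary nonzero $k\in\Z^n$ with $|k|_1\leq Q$ in the dual basis and estimates $|k\cdot\omega_0|$ from below by comparing it to $k\cdot v_j$. The harder inequality $\Psi'\MP\Psi$ is obtained by applying Minkowski's theorem on successive minima to the symmetric convex body
\[ K_Q=\{(q_0,q')\in\R\times\R^{n-1} \; | \; |q_0|\leq c\,\Psi(Q),\ |q_0\bar{\omega}_0-q'|\leq cQ^{-1}\} \]
for a suitable $c=c(n)$: the hypothesis $|k\cdot\omega_0|^{-1}\leq\Psi(Q)$ for all nonzero $k\in\Z^n$ with $|k|_1\leq Q$ forces all $n$ successive minima of $K_Q$ relative to $\Z^n$ to be bounded by a constant, producing $n$ linearly independent integer points that correspond to rational $Q$-approximations of $\omega_0$ with the required bounds on denominators.

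The main obstacle will be the final step: promoting these $n$ linearly independent good approximations into a genuine $\Z$-basis of $\Z^n$ without losing more than a multiplicative constant in the denominator bound, since Minkowski's second theorem does not, in general, supply a basis of the ambient lattice. The technical heart of \cite{BF12} is a Mahler-style basis-reduction procedure, which exploits the normalization $\omega_0=(1,\bar{\omega}_0)$ (so that the first coordinate of each $q_jv_j$ already equals $q_j$) to replace the linearly independent system by an honest $\Z$-basis with the same estimates up to a constant depending only on $n$. In the present paper I would simply quote that result as a black box, after which the proposition follows by combining it with the elementary translation from $Q$-approximations to the stated inequalities.
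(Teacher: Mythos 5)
Your proposal is correct and takes essentially the same approach as the paper, whose entire proof of this proposition is the single sentence referring to Theorem 2.1 and Proposition 2.3 of \cite{BF12} --- exactly the black-box citation you settle on at the end. Your expanded sketch of the mechanism inside \cite{BF12} (transference/duality for the easier direction $\Psi \MP \Psi'$, Minkowski's successive minima followed by a Mahler-style lattice-basis reduction for the harder direction $\Psi' \MP \Psi$) is an accurate summary of that reference and correctly locates the genuinely nontrivial step at the promotion of $n$ linearly independent good approximations to a $\Z$-basis of $\Z^n$.
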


For a proof, we refer to \cite{BF12}, Theorem 2.1 and Proposition $2.3$. 

Now given a $q$-rational vector $v$ and a function $P$ defined on $D_{r,s}\times O_h$, we define
\[ [P]_v(I,\theta,\omega)= \int^{1}_{0} P(I,\theta+tqv,\omega)dt.\]
Given $n$ rational vectors $v_1,\dots,v_n$, we let $[P]_{v_1,\dots,v_d}=[\cdots[P]_{v_1}\cdots]_{v_d}$. Finally we define
\[ [P](I,\omega)=\int_{\T^n}P(I,\theta,\omega)d\theta. \]
A consequence of the fact that the vectors $q_1v_1, \dots, q_nv_n$ form a $\Z$-basis of $\Z^n$ is contained in the following proposition. 

\begin{proposition}\label{cordio}
Let $v_1,\dots,v_n$ be rational vectors, of denominators $q_1,\dots,q_n$, such that $q_1v_1, \dots, q_nv_n$ form a $\Z$-basis of $\Z^n$, and $P$ a function defined on $D_{r,s}\times O_h$. Then
\[ [P]_{v_1,\dots,v_n}=[P]. \]  
\end{proposition}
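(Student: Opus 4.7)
The plan is to unfold the iterated averages to a single multi-dimensional integral over the unit cube $[0,1]^n$, then perform a linear change of variables whose Jacobian has determinant $\pm 1$ precisely because the $q_j v_j$ form a $\Z$-basis of $\Z^n$, and finally invoke translation-invariance of the Lebesgue measure on $\T^n$.

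More concretely, first I would iterate the definition of $[\cdot]_{v_j}$ from the inside out. Starting from
\[ [P]_{v_1}(I,\theta,\omega)=\int_0^1 P(I,\theta+t_1 q_1 v_1,\omega)\,dt_1, \]
and then averaging the result along the closed orbit of $v_2$, and so on, Fubini's theorem gives
\[ [P]_{v_1,\dots,v_n}(I,\theta,\omega)=\int_{[0,1]^n} P\!\left(I,\theta+\sum_{j=1}^n t_j q_j v_j,\omega\right) dt_1\cdots dt_n. \]

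Next, I would view the right-hand side as an integral over $\T^n$. Let $M$ be the integer matrix whose columns are $q_1 v_1,\dots,q_n v_n$, and consider the map $\Phi:\R^n/\Z^n\to\R^n/\Z^n$ defined by $\Phi(t)=Mt\bmod \Z^n$. Since $q_1v_1,\dots,q_nv_n$ form a $\Z$-basis of $\Z^n$, we have $|\det M|=1$, hence $M$ sends $\Z^n$ bijectively onto itself, and $\Phi$ is a measure-preserving diffeomorphism of $\T^n$. The change of variables $\varphi=\sum_j t_j q_j v_j\bmod \Z^n$ therefore yields
\[ \int_{[0,1]^n} P\!\left(I,\theta+\sum_{j=1}^n t_j q_j v_j,\omega\right) dt_1\cdots dt_n=\int_{\T^n} P(I,\theta+\varphi,\omega)\,d\varphi. \]

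Finally, by translation-invariance of the Haar/Lebesgue measure on $\T^n$, the last integral equals $\int_{\T^n} P(I,\theta',\omega)\,d\theta'=[P](I,\omega)$, independently of $\theta$, which gives the claim. The only point requiring care, and essentially the single nontrivial input, is the step $|\det M|=1$: this is exactly the $\Z$-basis hypothesis and is what makes the change of variables both bijective on $\T^n$ and measure-preserving; without that hypothesis the map would be a $|\det M|$-to-one cover and the result would pick up a combinatorial factor averaging over the fiber, not the clean equality $[P]_{v_1,\dots,v_n}=[P]$.
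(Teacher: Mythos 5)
Your proof is correct, and it is the natural measure-theoretic argument: unfold the iterated averages into a single integral over $[0,1]^n$, observe that the integer matrix $M$ with columns $q_1v_1,\dots,q_nv_n$ lies in $GL_n(\Z)$ (so $|\det M|=1$ and $M\Z^n=\Z^n$), and change variables on $\T^n$ using that $P$ is $\Z^n$-periodic in $\theta$ and that the map $t\mapsto Mt \bmod \Z^n$ is a measure-preserving diffeomorphism of the torus. The paper itself does not present a proof of Proposition~\ref{cordio} but defers to \cite{Bou13}, Corollary~6, so a line-by-line comparison is not possible here; however, the argument you give is exactly the one that the $\Z$-basis hypothesis is designed to enable, and your closing remark correctly identifies where it would break down otherwise (one would get an average over the fibers of a $|\det M|$-to-one covering, not $[P]$). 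The only point worth stating explicitly, which you use implicitly, is that the $\Z^n$-periodicity of $P(I,\cdot,\omega)$ is what lets you pass from the integral over the cube $[0,1]^n$ to an integral over $\T^n$ after the linear substitution.
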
 

A proof of this proposition can be found in \cite{Bou13}, Corollary 6.

\subsection{KAM step}\label{s42}

Now we describe an elementary step of our iterative procedure. To our Hamiltonian $H$, we will apply transformations of the form
\[ \mathcal{F}=(\Phi,\varphi): (I,\theta,\omega) \mapsto (\Phi(I,\theta,\omega),\varphi(\omega)) \]
which consist of a parameter-depending change of coordinates $\Phi$ and a change of parameters $\varphi$. Moreover, our change of coordinates will be of the form $\Phi=(U,V)$, where $U$ is affine in $I$ and $V$ is independent of $I$ , and will be symplectic for each fixed parameter $\omega$. It is easy to check that such transformations $\mathcal{F}=(\Phi,\varphi)$ form a group under composition.

From now on, we fix a positive constant $\eta$ sufficiently small (one could take, for instance, $\eta=1/66$).

\begin{proposition}\label{kamstep}
Let $H$ be as in~\eqref{Ham2}, with $\omega_0=(1,\bar{\omega}_0) \in \R^n$ non-resonant, consider $0<\sigma< s$ and $1 \PM Q$, and assume that
\begin{equation}\label{cond1}
\varepsilon r^{-1} \PM h \PM (Q\Psi(Q))^{-1}, \quad  1 \PM Q\sigma.
\end{equation}
Then there exists a real-analytic transformation 
\[ \mathcal{F}=(\Phi,\varphi) : D_{\eta r, s-\sigma} \times O_{h/4} \rightarrow D_{r, s} \times O_{h}, \]
such that $H \circ \mathcal{F}=N^+ + P^+$ with $N^+(I,\omega)=e^+(\omega)+\omega\cdot I$ and $|P^+|_{\eta r, s-\sigma,h/4} \leq \eta\varepsilon/8$. Moreover, we have the estimates
\[ |W(\Phi-\mathrm{Id})|_{\eta r,s-\sigma,h} \MP (r\sigma)^{-1}\Psi(Q)\varepsilon, \quad |W(D\Phi-\mathrm{Id})W^{-1}|_{\eta r,s-\sigma,h} \MP (r\sigma)^{-1}\Psi(Q)\varepsilon, \]
\[ |\varphi-\mathrm{Id}|_{h/4} \MP \varepsilon r^{-1}, \quad   h|D\varphi-\mathrm{Id}|_{h/4} \MP \varepsilon r^{-1},\]
where $W=\mathrm{Diag}(r^{-1}\mathrm{Id},\sigma^{-1}\mathrm{Id})$.
\end{proposition}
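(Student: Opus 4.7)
My plan is to build $\mathcal F=(\Phi,\varphi)$ as a single KAM step, in which $\Phi$ is the time-one flow (at each fixed $\omega$) of the Hamiltonian vector field of a generating function $\chi$ affine in $I$, and $\varphi$ is a near-identity change of parameter. Taylor-expanding
\[ P(I,\theta,\omega)=P(0,\theta,\omega)+\partial_I P(0,\theta,\omega)\cdot I+O(|I|^2), \]
the role of $\chi$ will be to kill the $\theta$-dependent parts of $P(0,\cdot,\omega)$ and of $\partial_I P(0,\cdot,\omega)$, while the corresponding averages $[P](0,\omega)$ and $[\partial_I P](0,\omega)$ will be absorbed respectively into the new energy constant $e^+(\omega)$ and into the relabelling $\varphi$ of the frequency.

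The key novelty, inherited from~\cite{BF12}, is that $\chi$ is constructed without ever inverting the small-divisor operator $\omega_0\cdot\partial_\theta$. Apply Proposition~\ref{dio} at the given $Q$ to obtain rational vectors $v_1,\dots,v_n$ with denominators $q_j\MP\Psi(Q)$, with $|\omega_0-v_j|\MP(q_jQ)^{-1}$, and such that $q_1v_1,\dots,q_nv_n$ form a $\Z$-basis of $\Z^n$. Setting $f_1:=P(0,\cdot,\omega)$ and $f_{j+1}:=[f_j]_{v_j}$, Proposition~\ref{cordio} yields $f_{n+1}=[P](0,\omega)$ and hence the telescoping identity
\[ P(0,\theta,\omega)-[P](0,\omega)=\sum_{j=1}^n\bigl(f_j-[f_j]_{v_j}\bigr). \]
For each $j$, solve the elementary equation $v_j\cdot\partial_\theta\chi_j^{(0)}=f_j-[f_j]_{v_j}$ by the explicit integral formula of~\S\ref{s15}: since $v_j$ is real, no $\theta$-analyticity is lost, and $|\chi_j^{(0)}|_{s,h}\MP q_j\varepsilon\MP\Psi(Q)\varepsilon$. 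Set $\chi^{(0)}:=\sum_j\chi_j^{(0)}$; applying the same rational-approximation scheme componentwise to $\partial_I P(0,\cdot,\omega)$ (which has size $\MP\varepsilon/r$ by Cauchy in $I$) yields $\chi^{(1)}(\theta,\omega)$ with $|\chi^{(1)}|_{s,h}\MP\Psi(Q)\varepsilon/r$, and one sets $\chi(I,\theta,\omega):=\chi^{(0)}(\theta,\omega)+\chi^{(1)}(\theta,\omega)\cdot I$, so that $|\chi|_{r,s,h}\MP\Psi(Q)\varepsilon$.

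Let $\Phi=\Phi_\chi^1$ be the time-one flow of $X_\chi$ at fixed $\omega$. Because $\chi$ is affine in $I$, the $\theta$-component of $\Phi$ is independent of $I$ and its $I$-component is affine in $I$, which is precisely the form $(U,V)$ required. Cauchy estimates give $|WX_\chi|_{\eta r,s-\sigma,h}\MP(r\sigma)^{-1}\Psi(Q)\varepsilon$, which by~\eqref{cond1} is much smaller than~$1$; a standard fixed-point argument then yields the mapping property $\Phi:D_{\eta r,s-\sigma}\times O_{h/4}\to D_{r,s}\times O_h$ as well as the two bounds on $W(\Phi-\mathrm{Id})$ and $W(D\Phi-\mathrm{Id})W^{-1}$ claimed in the statement. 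Expanding $H\circ\Phi$ by the Lie series one obtains $N_\omega+P_\omega+\{N_\omega,\chi\}+\mathcal R_2$, where $\mathcal R_2$ gathers all terms at least quadratic in $\chi$. By construction of $\chi$, the sum $P_\omega+\{N_\omega,\chi\}$ coincides, up to terms quadratic in $I$, with the averages $[P](0,\omega)+[\partial_I P](0,\omega)\cdot I$ plus three controlled defects: a drift $\sum_j(\omega_0-v_j)\cdot\partial_\theta\chi_j^{(0)}$ of size $\MP\varepsilon/(Q\sigma)$ coming from $\omega_0\neq v_j$, a parameter-drift $(\omega-\omega_0)\cdot\partial_\theta\chi$ of size $\MP h\Psi(Q)\varepsilon/\sigma$ coming from $\omega\neq\omega_0$, and the quadratic tail $\mathcal R_2$. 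Under~\eqref{cond1} all three are $\MP\eta\varepsilon/16$, which gives $|P^+|_{\eta r,s-\sigma,h/4}\leq\eta\varepsilon/8$ once the normal-form averages have been reabsorbed.

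Finally, to restore the exact form $N^+_\omega=e^+(\omega)+\omega\cdot I$, define $\varphi$ as the inverse of the near-identity map $\omega\mapsto\omega+[\partial_I P](0,\omega)$; its derivative differs from the identity by $\MP\varepsilon/(rh)$ via Cauchy in $\omega$, which under $\varepsilon r^{-1}\PM h$ is sufficiently small for the implicit function theorem to apply on $O_{h/4}$ and to produce the last two claimed bounds on $\varphi-\mathrm{Id}$ and $D\varphi-\mathrm{Id}$. The main obstacle, to my mind, will be the careful bookkeeping of these various defects: because $\chi$ only solves an \emph{approximate} homological equation, every contribution produced by the Lie series must be split into an ``ideal'' normal-form piece and a ``defect'' piece, and one must verify that all the defects, together with the quadratic tail $\mathcal R_2$ and with the frequency reparametrization by $\varphi$, remain below the target threshold $\eta\varepsilon/8$ in the final estimate on $P^+$.
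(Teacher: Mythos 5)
Your construction is correct and yields the proposition, but it organizes the KAM step in a genuinely different way from the paper. You form a \emph{single} generating function $\chi=\sum_{j=1}^n \chi_j$ (each $\chi_j$ being, up to the $I$-linear part, the same $F_j$ that the paper defines by solving the rational homological equation $\{F_j,N_j\}=P_j-P_{j+1}$ with $N_j=e(\omega)+v_j\cdot I$), and conjugate by the one time-one map $\Phi=X_\chi^1$. The paper instead conjugates by the \emph{composition} $\Phi=X_{F_1}^1\circ\dots\circ X_{F_n}^1$ of $n$ successive flows, at each step exactly cancelling $P_j-P_{j+1}$ and collecting a remainder $\tilde P_j\MP(Q\sigma)^{-1}\varepsilon$. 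These are different maps, and the bookkeeping differs in one essential place: because the paper's $F_j$ solve \emph{exact} homological equations with respect to the rational normal forms $N_j$, there is no first-order defect at all and the only error terms are the $n$ controlled remainders $\tilde P_j$; in your approach the merged $\chi$ solves only an \emph{approximate} homological equation for $N_\omega$, and to bound the Lie-series tail $\mathcal R_2$ you must first substitute the identity $\{N_\omega,\chi\}=-(\bar P-[\bar P])-D$ (with $D$ the total drift) before taking Cauchy estimates --- the naive bound $|\{N_\omega,\chi\}|\MP\sigma^{-1}\Psi(Q)\varepsilon$ used directly inside $\{\{N_\omega,\chi\},\chi\}$ is too weak to close the inequality $|P^+|\le\eta\varepsilon/8$. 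You flag exactly this bookkeeping issue at the end, and with that substitution your estimate does close under \eqref{cond1}: drift, parameter drift and $\mathcal R_2$ are each $\MP(Q\sigma)^{-1}\varepsilon$, while the $I$-quadratic tail $P-\bar P$ is $\le\eta\varepsilon/16$ on $D_{2\eta r,s}$ exactly as in the paper. Two smaller remarks: the paper averages $\bar P$ directly rather than treating $P(0,\cdot,\omega)$ and $\partial_I P(0,\cdot,\omega)$ separately (equivalent, since $[\cdot]_{v_j}$ commutes with $\partial_I$ and with evaluation at $I=0$); and the final change of parameter $\varphi$ you describe (inverting $\omega\mapsto\omega+[\partial_I P](0,\omega)$ via Lemma~\ref{tech2}) coincides with the paper's Step 6. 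The composition scheme of the paper is arguably cleaner in that every elementary conjugation solves an exact equation and no cancellation argument is needed in the quadratic remainder; your single-flow scheme is more economical notationally but requires that one extra algebraic step in estimating $\mathcal R_2$.
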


\begin{proof}

We divide the proof of the KAM step in six small steps. Except for the last one, everything will be uniform in $\omega \in O_h$, so for simplicity, in the first five steps we will drop the dependence on the parameter $\omega \in O_h$. Let us first notice that~\eqref{cond1} implies the following five inequalities:
\begin{equation}\label{condcond}
h \PM (Q\Psi(Q))^{-1}, \quad \varepsilon \PM \sigma r\Psi(Q)^{-1}, \quad \varepsilon \PM r(Q\Psi(Q))^{-1}, \quad 1 \PM Q\sigma, \quad \varepsilon r^{-1} \PM h. 
\end{equation}

\medskip

\textit{1. Affine approximation of $P$}

\medskip

Let $\bar{P}$ be the linearization of $P$ in $I$ at $I=0$; that is,  
\[ \bar{P}(I,\theta)=P(0,\theta)+\partial_I P(0,\theta) \cdot I.\] Using Cauchy's estimate, it is easy to see that $|\bar{P}|_{r,s} \MP \varepsilon$. Moreover, using Lemma~\ref{tech1}, we have
\begin{equation}\label{trunc}
|P-\bar{P}|_{2\eta r,s} \leq (2\eta)^2(1-2\eta)^{-1}\varepsilon \leq \eta\varepsilon/16
\end{equation}
where we used in the second inequality that $\eta$ is small enough.

\medskip

\textit{2. Rational approximations of $\omega_0$}

\medskip

Since $\omega_0$ is non-resonant, given $1 \PM Q$, we can apply Proposition~\eqref{dio}: there exist $n$ rational vectors $v_1, \dots, v_n$, of denominators $q_1, \dots, q_n$, such that $q_1v_1, \dots, q_nv_n$ form a $\Z$-basis of $\Z^n$ and for $j\in\{1,\dots,n\}$,
\[ |\omega_0-v_j|\MP(q_j Q)^{-1}, \quad 1 \leq q_j \MP \Psi(Q).\]

\medskip

\textit{3. Rational approximations of $\omega \in O_h$}

\medskip

For any $\omega \in O_h$, using the first inequality in~\eqref{condcond} and $q_j \MP \Psi(Q)$,  we have
\begin{equation}\label{shift}
|\omega-v_j| \leq |\omega-\omega_0|+|\omega_0-v_j| \MP h + (q_j Q)^{-1} \MP (Q\Psi(Q))^{-1} + (q_j Q)^{-1} \MP (q_j Q)^{-1}.
\end{equation}

\medskip

\textit{4. Successive rational averagings}

\medskip

Let $P_1=\bar{P}$, and define inductively $P_{j+1}=[P_j]_{v_j}$ for $1 \leq j \leq n$. Let us also define $F_j$, for $1 \leq j \leq n$, by
\[ F_j(I,\theta)=q_j \int_0^1 (P_j-P_{j+1})(I,\theta+tq_jv_j)tdt \]
and $N_j$ by $N_j(I)=e(\omega)+v_j \cdot I$. It is then easy to check, by a simple integration by parts, that the equations
\begin{equation}\label{eqhomo}
\{F_j,N_j\}=P_j-P_{j+1}, \quad 1 \leq j \leq n,
\end{equation}
are satisfied, where $\{\,.\,,\,.\,\}$ denotes the usual Poisson bracket. Moreover, we obviously have the estimates
\begin{equation}\label{Estim0}
|P_j|_{r,s} \leq |\bar{P}|_{r,s} \MP \varepsilon
\end{equation}
and \begin{equation}\label{estim00}
|F_j|_{r,s} \leq q_j|P_j|_{r,s} \MP \Psi(Q)\varepsilon.
\end{equation}

Next, for any $0 \leq j \leq n$, define $r_j=r-(2n)^{-1}jr$ and $s_j=s-n^{-1}j\sigma$. Obviously $r_j>0$ whereas $s_j>0$ follows from $\sigma < s$. Using~\eqref{estim00} and Cauchy's estimate,
we have
\[ |\partial_\theta F_j|_{r_j,s_j} \leq n(j\sigma)^{-1}|F_j|_{r,s} \MP \sigma^{-1}\Psi(Q)\varepsilon, \quad |\partial_I F_j|_{r_j,s_j} \leq 2n(jr)^{-1}|F_j|_{r,s} \MP r^{-1}\Psi(Q)\varepsilon, \]
and together with the second inequality of~\eqref{cond1} with a suitable implicit constant, we can ensure that 
\[ |\partial_\theta F_j|_{r_j,s_j} \leq (2n)^{-1}r, \quad |\partial_I F_j|_{r_j,s_j} \leq n^{-1}\sigma. \]
This implies that for $1 \leq j \leq n$, the time-one map $X_{F_j}^1$ of the Hamiltonian flow of $F_j$ defines a symplectic real-analytic embedding
\[ X_{F_j}^1=(U_j,V_j) : D_{r_j,s_j} \rightarrow D_{r_{j-1},s_{j-1}}.  \]
Moreover, as $\bar{P}$ is affine in $I$ then so are the $F_j$, for $1 \leq j \leq n$, and therefore $U_j$ is affine in $I$ and $V_j$ is independent of $I$, and we also have the estimates
\begin{equation}\label{estimdist}
|U_j - \mathrm{Id}|_{r_j,s_j} \leq |\partial_\theta F_j|_{r_j,s_j} \MP \sigma^{-1}\Psi(Q)\varepsilon, \quad |V_j - \mathrm{Id}|_{r_j,s_j} \leq |\partial_I F_j|_{r_j,s_j} \MP r^{-1}\Psi(Q)\varepsilon.
\end{equation}
The Jacobian of $X_{F_j}^1$ is given by the matrix
\[ DX_{F_j}^1=
\begin{pmatrix} \partial_I U_j & \partial_\theta U_j  \\ 0 & \partial_\theta V_j  \end{pmatrix}. \]
If we define $r_j^+=r-(4n)^{-1}jr$ and $s_j=s-(2n)^{-1}j\sigma$, then~\eqref{estim00} and Cauchy's estimate also implies that
\[ |\partial_\theta F_j|_{r_j^+,s_j^+} \MP \sigma^{-1}\Psi(Q)\varepsilon, \quad |\partial_I F_j|_{r_j^+,s_j^+} \MP r^{-1}\Psi(Q)\varepsilon, \]
and, as $r_j^+-r_j=(4n)^{-1}jr$ and $s_j^+-s_j=(2n)^{-1}j\sigma$, a further Cauchy's estimate proves that
\[ |\partial_I U_j - \mathrm{Id}|_{r_j,s_j} \MP (r\sigma)^{-1}\Psi(Q)\varepsilon, \quad  |\partial_\theta V_j - \mathrm{Id}|_{r_j,s_j} \MP (r\sigma)^{-1}\Psi(Q)\varepsilon, \quad |\partial_\theta U_j|_{r_j,s_j} \MP \sigma^{-2}\Psi(Q)\varepsilon. \]
The estimates~\eqref{estimdist} and the above estimates can be conveniently written as
\begin{equation}\label{estimdistnew}
|W(X_{F_j}^1-\mathrm{Id})|_{r_j,s_j} \MP (r\sigma)^{-1}\Psi(Q)\varepsilon, \quad |W(DX_{F_j}^1-\mathrm{Id})W^{-1}|_{r_j,s_j} \MP (r\sigma)^{-1}\Psi(Q)\varepsilon, 
\end{equation}
where $W=\mathrm{Diag}(r^{-1}\mathrm{Id},\sigma^{-1}\mathrm{Id})$.

Let $S_j=\omega\cdot I - v_j\cdot I$ so that $N=N_j+S_j$, and let $H_j=N+P_j$. Writing $H_j=N+P_j=N_j+S_j+P_j$ and using the equality~\eqref{eqhomo}, a standard computation based on Taylor's formula with integral remainder gives
\[ H_j \circ X_{F_j}^1 = N+[P_j]_{v_j} +\tilde{P}_j= N+P_{j+1}+\tilde{P}_j \]
with
\[ \tilde{P}_j=\int_0^1 \{ (1-t)P_{j+1}+t P_j +S_j, F_j \} \circ X_{F_j}^t dt  \] 
and where $X_{F_j}^t$ is the time-$t$ map of the Hamiltonian flow of $F_j$. Using~\eqref{shift}, \eqref{Estim0}, \eqref{estim00}, the definition of the Poisson bracket and Cauchy's estimate, on easily obtains 
\begin{equation}\label{estim1}
|\tilde{P}_j|_{r_j,s_j} \MP (\sigma r)^{-1}\Psi(Q)\varepsilon^2 +  (Q\sigma)^{-1}\varepsilon \MP (Q\sigma)^{-1}\varepsilon
\end{equation}
where the last inequality follows from the third inequality of~\eqref{condcond}.

\medskip

\textit{5. Change of coordinates}

\medskip

For $1 \leq j \leq n$, let $\Phi_j= X_{F_1}^1 \circ \cdots \circ X_{F_j}^1 : D_{r_j,s_j} \rightarrow D_{r,s}$. Since $H_1=N+\overline{P}$, by a straightforward induction we have
\[ H_1 \circ \Phi_j = (N+\overline{P}) \circ \Phi_j= N +P_{j+1} +P^+_j    \]
where $P_j^+$ is defined inductively by $P_1^+=\tilde{P}_1$ and $P_{j+1}^+=\tilde{P}_{j+1}+P_j^+ \circ X_{F_{j+1}}^1$ for $1 \leq j \leq n-1$. Let $\Phi=\Phi_n$, and first note that $\Phi=(U,V)$ where $U$ is affine in $I$ and $V$ is independent of $I$, since each $X_{F_j}^1$ is of this form. Moreover, $\Phi : D_{r/4,s-\sigma} \rightarrow D_{r,s}$, so in particular $\Phi : D_{\eta r,s-\sigma} \rightarrow D_{r,s}$ as $\eta$ is small enough, and using~\eqref{estimdistnew} and a classical telescopic argument, we have the estimates
\begin{equation*}
|W(\Phi-\mathrm{Id})|_{\eta r,s-\sigma} \MP (r\sigma)^{-1}\Psi(Q)\varepsilon, \quad |W(D\Phi-\mathrm{Id})W^{-1}|_{\eta r,s-\sigma} \MP (r\sigma)^{-1}\Psi(Q)\varepsilon. 
\end{equation*}
Concerning $P_n^+$, using~\eqref{estim1} and the fourth equality of~\eqref{condcond} with a suitable implicit constant, we can ensure that
\[ |P_n^+|_{\eta r,s-\sigma} \MP (Q\sigma)^{-1}\varepsilon \leq \eta\varepsilon/16.  \] 
Now as $H=N+P=N+\overline{P}+P-\overline{P}=H_1+P-\overline{P}$, this gives
\[ H \circ \Phi =H_1 \circ \Phi + (P - \bar{P}) \circ \Phi= N + P_{n+1} + P_n^+ + (P-\bar{P}) \circ \Phi.  \]
We finally set $P^+=P_n^+ + (P-\bar{P}) \circ \Phi$, and as $P_{n+1}=[\cdots[\bar{P}]_{v_1}\cdots]_{v_n}=[\bar{P}]_{v_1,\dots,v_n}$, by Proposition~\ref{cordio}, $P_{n+1}=[\bar{P}]$, we arrive at
\[ H \circ \Phi = N + [\bar{P}] + P^+.  \]
Using the second inequality of~\eqref{condcond} with an appropriate implicit constant, we may assume that the image of $\Phi$ actually sends $D_{\eta r,s-\sigma}$ into $D_{2\eta r,s}$, and together with~\eqref{trunc}, we obtain the estimate
\[ |P^+|_{\eta r, s-\sigma} \leq |P_n^+|_{\eta r,s-\sigma} + |(P-\bar{P}) \circ \Phi|_{\eta r,s-\sigma} \leq |P_n^+|_{\eta r,s-\sigma} + |P-\bar{P}|_{2\eta r,s-\sigma} \leq \eta\varepsilon/16+\eta\varepsilon/16=\eta\varepsilon/8. \]

\medskip

\textit{6. Change of frequencies}

\medskip

As $\bar{P}$ is affine in $I$, $[\bar{P}]$ is independent of $\theta$ and of the form $[\bar{P}](I,\omega)=c(\omega)+\nu(\omega)\cdot I$, with $c(\omega) \in \C$ and $\nu(\omega) \in \C^n$, and therefore
\[ (N+[\bar{P}])(I,\omega)=e(\omega)+c(\omega)+(\omega+\nu(\omega))\cdot I=e^+(\omega)+(\omega+\nu(\omega))\cdot I. \]
Since $\nu=\partial_I [\bar{P}]$, Cauchy's estimate together with the fifth inequality of~\eqref{condcond} implies that the map $\nu$ satisfy the estimate
\[ |\nu|_{h} \MP \varepsilon r^{-1} \leq h/4. \]
Setting $f(\omega)=\omega+\nu(\omega)$, we can apply Lemma~\ref{tech2} to find a real-analytic inverse $\varphi : O_{h/4} \rightarrow O_h$ to $f$ satisfying the estimates
\[ |\varphi-\mathrm{Id}|_{h/4} \MP \varepsilon r^{-1}, \quad h|D\varphi-\mathrm{Id}|_{h/4} \MP \varepsilon r^{-1}.\]

Eventually, we set $N^+=(N+[\bar{P}]) \circ \varphi$ and $\mathcal{F}=(\Phi,\varphi) : D_{\eta r, s-\sigma} \times O_{h/4} \rightarrow D_{r, s} \times O_{h}$, and we obtain
\[ H \circ \mathcal{F}=N^+ + P^+ \]
as wanted, with the desired estimates on $\mathcal{F}$ and on $P^+$.

\end{proof}

\subsection{Iterations and convergence}\label{s43}

Recall that $\eta$ has been fixed before Proposition~\ref{kamstep}, and now we define, for $i \in \N$, the following decreasing geometric sequences:
\[ \varepsilon_i=(\eta/8)^i \varepsilon, \quad r_i=\eta^i r, \quad h_i=(1/4)^i h.\]
Next, for a constant $Q_0$ to be chosen below, we define $\Delta_i$ and $Q_i$, $i\in \N$, by 
\[ \Delta_i=2^i \Delta(Q_0), \quad Q_i=\Delta^*(\Delta_i)=\sup\{ Q \geq 1 \; | \; \Delta (Q) \leq \Delta_i \}, \]
and then we define $\sigma_i$, $i\in \N$, by
\[ \sigma_i= C Q_i^{-1} \]
where $C \geq 1$ is a constant, depending only $n$, sufficiently large so that the last part of~\eqref{cond1} is satisfied for $\sigma=\sigma_i$ and $Q=Q_i$. Finally, we define $s_i$, $i\in\N$, by $s_0=s$ and $s_{i+1}=s_i-\sigma_i$ for $i\in\N$. 

We claim that, assuming $\Delta^*$ satisfies~\eqref{BR2}, we can choose $Q_0$ sufficiently large so that 
\begin{equation*}
\lim_{i \rightarrow +\infty} s_i\geq s/2 \Longleftrightarrow \sum_{i \in \N} \sigma_i \leq s/2.
\end{equation*}
Indeed, since $Q_i=\Delta^*(\Delta_i)=\Delta^*\left(2^{i}\Delta(Q_0)\right)$, we have
\[ \sum_{i\geq 1}Q_i^{-1}=\sum_{i\geq 1} \frac{1}{\Delta^*\left(2^{i}\Delta(Q_0)\right)}\leq \int_{0}^{+\infty}\frac{dy}{\Delta^*\left(2^{y}\Delta(Q_0)\right)}=(\ln 2)^{-1}\int_{\Delta(Q_0)}^{+\infty}\frac{dx}{x\Delta^*(x)}<+\infty \]
where the last integral converges since $\Delta^*$ satisfies~\eqref{BR2}. Now as $\sigma_i = C Q_i^{-1}$, we have
\[ \sum_{i \in \N} \sigma_i = C \sum_{i \in \N} Q_i^{-1}=CQ_0^{-1}+C \sum_{i \geq 1} Q_i^{-1} \leq CQ_0^{-1}+C (\ln 2)^{-1}\int_{\Delta(Q_0)}^{+\infty}\frac{dx}{x\Delta^*(x)} \leq s/2 \]
provided we choose $Q_0$ sufficiently large in order to have 
\begin{equation}\label{eqQ}
Q_0^{-1}+(\ln 2)^{-1}\int_{\Delta(Q_0)}^{+\infty}\frac{dx}{x\Delta^*(x)} \leq (2C)^{-1} s.
\end{equation}

\begin{proposition}\label{kamiter}
Let $H$ be as in~\eqref{Ham2}, with $\Delta^*=\Delta^*_{\omega_0}$ satisfying~\eqref{BR2}, and fix $Q_0$ sufficiently large so that~\eqref{eqQ} is satisfied. Assume that
\begin{equation}\label{cond2}
\varepsilon r^{-1} \PM h \PM \Delta(Q_0)^{-1}.
\end{equation}
Then, for each $i\in \N$, there exists a real-analytic transformation 
\[ \mathcal{F}^i : D_{r_i,s_i} \times O_{h_i} \rightarrow D_{r, s} \times O_{h}, \]
such that $H \circ \mathcal{F}^i=N^i + P^i$ with $N^i(I,\omega)=e^i(\omega)+\omega\cdot I$ and $|P^i|_{r_i, s_i,h_i} \leq \varepsilon_i$. Moreover, we have the estimates
\[ |\bar{W}_0(\mathcal{F}^{i+1}-\mathcal{F}^{i})|_{r_{i+1}, s_{i+1},h_{i+1}} \MP \varepsilon_i(r_i h_i)^{-1} \]
where $\bar{W}_0=\mathrm{Diag}(r_0^{-1}\mathrm{Id},\sigma_0^{-1}\mathrm{Id},h_0^{-1}\mathrm{Id})$.
\end{proposition}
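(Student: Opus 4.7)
The plan is to prove the proposition by induction on $i\in\N$, iteratively applying Proposition~\ref{kamstep}. For the base case $i=0$, set $\mathcal{F}^0=\mathrm{Id}$, $N^0=N$, $P^0=P$; the hypothesis on $|P|_{r,s,h}$ then gives $|P^0|_{r_0,s_0,h_0}\leq\varepsilon_0$. For the inductive step, assume that $\mathcal{F}^i:D_{r_i,s_i}\times O_{h_i}\to D_{r,s}\times O_h$ has been constructed with $H\circ\mathcal{F}^i=N^i+P^i$, $N^i(I,\omega)=e^i(\omega)+\omega\cdot I$, and $|P^i|_{r_i,s_i,h_i}\leq\varepsilon_i$. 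Applying Proposition~\ref{kamstep} to the Hamiltonian $N^i+P^i$ with parameters $(r_i,s_i,h_i,\sigma_i,Q_i)$ produces a real-analytic transformation $\mathcal{F}_{i+1}:D_{r_{i+1},s_{i+1}}\times O_{h_{i+1}}\to D_{r_i,s_i}\times O_{h_i}$ such that $(N^i+P^i)\circ\mathcal{F}_{i+1}=N^{i+1}+P^{i+1}$ with $|P^{i+1}|_{r_{i+1},s_{i+1},h_{i+1}}\leq\eta\varepsilon_i/8=\varepsilon_{i+1}$; setting $\mathcal{F}^{i+1}:=\mathcal{F}^i\circ\mathcal{F}_{i+1}$ closes the induction on the normal form.

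The preliminary task is to check that the three hypotheses \eqref{cond1} propagate along the iteration. The third condition $1\PM Q_i\sigma_i$ is automatic from the definition $\sigma_i=CQ_i^{-1}$ with $C$ large. The first condition $\varepsilon_i r_i^{-1}\PM h_i$ is actually strengthened inductively, since $\varepsilon_i r_i^{-1}$ contracts by a factor $1/8$ per step while $h_i$ contracts only by $1/4$, so the initial bound $\varepsilon r^{-1}\PM h$ from \eqref{cond2} survives. For the middle condition $h_i\PM(Q_i\Psi(Q_i))^{-1}=\Delta(Q_i)^{-1}$, one uses $\Delta(Q_i)\leq\Delta_i=2^i\Delta(Q_0)$ together with the geometric decay $h_i=h/4^i$, which is faster than $\Delta_i^{-1}=2^{-i}\Delta(Q_0)^{-1}$, so the initial bound $h\PM\Delta(Q_0)^{-1}$ from \eqref{cond2} yields $h_i\PM\Delta(Q_i)^{-1}$ at every stage. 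The implicit domain condition $\sigma_i<s_i$ follows from $s_i\geq s/2>0$, already established before the proposition statement via the summability $\sum_i\sigma_i\leq s/2$; this is the one place where the Bruno--Rüssmann assumption \eqref{BR2} enters, through the choice of $Q_0$ in \eqref{eqQ}.

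For the estimate on $\mathcal{F}^{i+1}-\mathcal{F}^i=\mathcal{F}^i\circ\mathcal{F}_{i+1}-\mathcal{F}^i$, the mean value theorem yields, after inserting $\bar W_0^{-1}\bar W_0$,
\[
\bar W_0(\mathcal{F}^{i+1}-\mathcal{F}^i)=\int_0^1\bigl(\bar W_0\,D\mathcal{F}^i\,\bar W_0^{-1}\bigr)\,dt\cdot\bar W_0(\mathcal{F}_{i+1}-\mathrm{Id}).
\]
The factor $|\bar W_0(\mathcal{F}_{i+1}-\mathrm{Id})|\MP\varepsilon_i(r_ih_i)^{-1}$ follows directly from the kamstep estimates: using $\sigma_i=CQ_i^{-1}$ and $h_i\PM\Delta(Q_i)^{-1}$, the $(I,\theta)$-part gives $\MP\Delta(Q_i)\varepsilon_i/r_i\MP\varepsilon_i/(r_ih_i)$, while the $\omega$-part gives $\MP h_0^{-1}\varepsilon_i r_i^{-1}\leq\varepsilon_i/(r_ih_i)$, exploiting that $\bar W_0$ is componentwise smaller than $\bar W_i:=\mathrm{Diag}(r_i^{-1}\mathrm{Id},\sigma_i^{-1}\mathrm{Id},h_i^{-1}\mathrm{Id})$. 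The factor $\bar W_0\,D\mathcal{F}^i\,\bar W_0^{-1}$ is handled by a telescoping chain-rule argument: it is the product of factors $\bar W_0\,D\mathcal{F}_j\,\bar W_0^{-1}=\mathrm{Id}+O(\varepsilon_{j-1}(r_{j-1}h_{j-1})^{-1})$, and since $\varepsilon_j(r_jh_j)^{-1}\MP 2^{-j}\varepsilon(rh)^{-1}$ is geometrically summable with total bounded by a multiple of $\varepsilon(rh)^{-1}\PM 1$ from \eqref{cond2}, the product stays uniformly bounded in $i$. The main obstacle is precisely this telescoping step, which requires propagating the kamstep estimates through the composition under the \emph{fixed} weight $\bar W_0$ rather than the step-dependent natural weight $\bar W_i$; the block-triangular structure of $D\mathcal{F}_j$ (with $\Phi$ affine in $I$) and the geometric decay built into the specific choices of $\varepsilon_i,r_i,h_i$ are what make the telescoping product convergent, producing a Cauchy-sequence-like control on $\{\mathcal{F}^i\}$ that will yield convergence in the next subsection.
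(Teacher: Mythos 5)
Your overall structure matches the paper's: induction with base case $\mathcal{F}^0=\mathrm{Id}$, the propagation of the three hypotheses of \eqref{cond1} along the iteration (with the same observation that $\varepsilon_ir_i^{-1}$, $h_i$, $\Delta_i^{-1}$ decay geometrically at rates $1/8$, $1/4$, $1/2$, together with $\Delta(Q_i)\le\Delta_i$), the chain rule and mean value theorem for $\mathcal{F}^{i+1}-\mathcal{F}^i$, and the telescoping product. However, the key estimate on the Jacobian is carried out with the wrong weight and, as stated, it is false.

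You insert $\bar{W}_0^{-1}\bar{W}_0$ in the middle and claim $\bar{W}_0\,D\mathcal{F}_j\,\bar{W}_0^{-1}=\mathrm{Id}+O(\varepsilon_{j-1}(r_{j-1}h_{j-1})^{-1})$, so that the product $\bar{W}_0\,D\mathcal{F}^i\,\bar{W}_0^{-1}$ is uniformly bounded. This is not correct, and the block-triangular structure you invoke does not save it. Writing $\bar{W}_j=\mathrm{Diag}(r_j^{-1}\mathrm{Id},\sigma_j^{-1}\mathrm{Id},h_j^{-1}\mathrm{Id})$, the KAM-step estimates give $|\bar{W}_j(D\mathcal{F}_j-\mathrm{Id})\bar{W}_j^{-1}|\MP\varepsilon_j(r_jh_j)^{-1}$; conjugating instead by the \emph{fixed} $\bar{W}_0=\bar{W}_j\cdot(\bar{W}_j^{-1}\bar{W}_0)$ rescales the $(\theta,\omega)$-block by the factor $(\sigma_j/\sigma_0)(h_0/h_j)$. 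Here $\sigma_j/\sigma_0=Q_0/Q_j\le 1$ but $h_0/h_j=4^j$, and since $Q_j$ need not grow geometrically (it is $\Delta^*(2^j\Delta(Q_0))$, which for a general Bruno--R\"ussmann vector can grow arbitrarily slowly), the $(\theta,\omega)$-block of $\bar{W}_0(D\mathcal{F}_j-\mathrm{Id})\bar{W}_0^{-1}$ is only $\MP 4^j\varepsilon_j(r_jh_j)^{-1}\MP 2^j\varepsilon(rh)^{-1}$, which is \emph{not} summable. Consequently $|\bar{W}_0\,D\mathcal{F}^i\,\bar{W}_0^{-1}|$ is not uniformly bounded in $i$, and the norm-times-norm estimate in your mean value step does not yield $\MP\varepsilon_i(r_ih_i)^{-1}$. (There is in fact componentwise cancellation -- the large $(\theta,\omega)$-entry of the matrix multiplies the correspondingly small $\omega$-component of the vector $\bar{W}_0(\mathcal{F}_i-\mathrm{Id})$ -- but your argument does not track this, and the stated norm bound is simply wrong.)

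The paper avoids the issue by inserting the step-adapted weight $\bar{W}_i$ at the middle: one bounds the matrix $|\bar{W}_0\,D\mathcal{F}^i\,\bar{W}_i^{-1}|$, which telescopes into a product of the factors $\bar{W}_l\,D\mathcal{F}_l\,\bar{W}_l^{-1}$ interleaved with the transition matrices $\bar{W}_{l}\bar{W}_{l+1}^{-1}$. The latter have all entries $\le 1$ because $r_l,\sigma_l,h_l$ are decreasing, so the product is $\MP\prod_l\bigl(1+\varepsilon_l(r_lh_l)^{-1}\bigr)\MP 1$; this is combined with the vector bound $|\bar{W}_i(\mathcal{F}_i-\mathrm{Id})|\MP\varepsilon_i(r_ih_i)^{-1}$, which is exactly what the KAM step gives after absorbing $\sigma_i^{-1}\Psi(Q_i)=Q_i\Psi(Q_i)\PM h_i^{-1}$. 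Replacing your fixed-weight insertion by this step-adapted one closes the gap and otherwise leaves your proof intact.
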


\begin{proof}
For $i=0$, we let $\mathcal{F}^0$ be the identity and there is nothing to prove. The general case follows by an easy induction. Indeed, assume that the statement holds true for some $i\in \N$, and let $H_i=H \circ \mathcal{F}^i=N^i + P^i$, defined on $D_{r_i,s_i} \times O_{h_i}$. We want to apply Proposition~\eqref{kamstep} to this Hamiltonian, with $\varepsilon=\varepsilon_i$, $r=r_i$, $s=s_i$, $h=h_i$, $\sigma=\sigma_{i}$ and $Q=Q_i$. First, we need to check that $0<\sigma_{i}< s_i$ and $1 \PM Q_i$. The first condition is equivalent to $\sum_{l=0}^{i}\sigma_l < s$, whereas the second condition is implied by $1 \PM Q_0$, and it is easy to see that both conditions are satisfied by the choice of $Q_0$ in~\eqref{eqQ}, as $s<1$. Then we need to check that the conditions  
\begin{equation*}
\varepsilon_i r_i^{-1} \PM h_i \PM \Delta(Q_i)^{-1}, \quad 1 \PM Q_i\sigma_i,
\end{equation*}
are satisfied. Since 
\begin{equation}\label{eqdelta}
\Delta(Q_i) = \Delta(\Delta^*(\Delta_i)) \leq \Delta_i, 
\end{equation}
it is sufficient to check the conditions
\begin{equation}\label{condit}
\varepsilon_i r_i^{-1} \PM h_i \PM \Delta_i^{-1}, \quad 1 \PM Q_i\sigma_i.
\end{equation}
The second condition of~\eqref{condit} is satisfied, for all $i \in \N$, simply by the choice of the constant in the definition of $\sigma_i$. As for the first condition of~\eqref{condit}, it is satisfied for $i=0$ by~\eqref{cond2}, and since the sequences $\varepsilon_i r_i^{-1}$, $h_i$ and $\Delta_i^{-1}$ decrease at a geometric rate with respective ratio $1/8$, $1/4$ and $1/2$, the first condition of~\eqref{condit} is therefore satisfied for any $i \in \N$. Hence Proposition~\eqref{kamstep} can be applied: there exists a real-analytic transformation 
\[ \mathcal{F}_i=(\Phi_i,\varphi_i) : D_{r_{i+1}, s_{i+1}} \times O_{h_{i+1}} \rightarrow D_{r_i, s_i} \times O_{h_i}, \]
such that $H_i \circ \mathcal{F}_i=N^{i,+} + P^{i,+}$ with $N^{i,+}(I,\omega)=e^{i,+}(\omega)+\omega\cdot I$ and $|P^{i,+}|_{r_{i+1}, s_{i+1},h_{i+1}} \leq \varepsilon_{i+1}$. Moreover, we have the estimates
\[ |W_i(\Phi_i-\mathrm{Id})|_{r_{i+1}, s_{i+1},h_{i+1}} \MP (r_i\sigma_i)^{-1}\Psi(Q_i)\varepsilon_i, \quad |W_i(D\Phi-\mathrm{Id})W_i^{-1}|_{r_{i+1}, s_{i+1},h_{i+1}} \MP (r_i\sigma_i)^{-1}\Psi(Q_i)\varepsilon_i, \]
\[ |\varphi_i-\mathrm{Id}|_{h_{i+1}} \MP \varepsilon_i r_i^{-1}, \quad   h_i|D\varphi_i-\mathrm{Id}|_{h_{i+1}} \MP \varepsilon_i r_i^{-1},\]
where $W_i=\mathrm{Diag}(r_i^{-1}\mathrm{Id},\sigma_i^{-1}\mathrm{Id})$. 

We just need to set $\mathcal{F}^{i+1}=\mathcal{F}^{i} \circ \mathcal{F}_{i}$, $N^{i+1}=N^{i,+}$ and $P^{i+1}=P^{i,+}$ to have 
\[ H \circ \mathcal{F}^{i+1}= H_i \circ \mathcal{F}_i=N^{i+1} + P^{i+1} \]
with $N^{i+1}(I,\omega)=e^{i+1}(\omega)+\omega\cdot I$ and $|P^{i+1}|_{r_{i+1}, s_{i+1},h_{i+1}} \leq \varepsilon_{i+1}$.

It remains to estimate $\mathcal{F}^{i+1}-\mathcal{F}^{i}$. Setting $\bar{W}_i=\mathrm{Diag}(r_i^{-1}\mathrm{Id},\sigma_i^{-1}\mathrm{Id},h_i^{-1}\mathrm{Id})$, the estimates above implies
\begin{equation}\label{bla1}
|\bar{W}_i(\mathcal{F}_i - \mathrm{Id})|_{r_{i+1}, s_{i+1},h_{i+1}} \MP \max\{(r_i\sigma_i)^{-1}\Psi(Q_i)\varepsilon_i,\varepsilon_i(r_i h_i)^{-1}\} \MP \varepsilon_i(r_i h_i)^{-1}
\end{equation}
since $\sigma_i^{-1} \Psi(Q_i) = Q_i \Psi(Q_i) \PM h_i^{-1}$, and similarly
\begin{equation}\label{bla2}
|\bar{W}_i(D\mathcal{F}_i - \mathrm{Id})\bar{W}_i^{-1}|_{r_{i+1}, s_{i+1},h_{i+1}} \MP \varepsilon_i(r_i h_i)^{-1}. 
\end{equation}
Using a classical telescoping argument, the fact that $|\bar{W}_{i-1}\bar{W}_i| \leq 1$ and the estimate~\eqref{bla2}, we get
\begin{equation}\label{bounded}
|\bar{W}_0 D\mathcal{F}^i \bar{W}_i^{-1}|_{r_{i+1}, s_{i+1},h_{i+1}} \MP \prod_{l=0}^{i} \left(1+\varepsilon_l(r_l h_l)^{-1}\right) \MP 1
\end{equation}
as $\varepsilon_l(r_l h_l)^{-1}$ decreases geometrically. Then, from the mean value theorem, we have
\begin{eqnarray*}
|\bar{W}_0(\mathcal{F}^{i+1}-\mathcal{F}^{i})|_{r_{i+1}, s_{i+1},h_{i+1}} & = & |\bar{W}_0(\mathcal{F}^{i} \circ \mathcal{F}_i-\mathcal{F}^{i})|_{r_{i+1}, s_{i+1},h_{i+1}} \\ 
& \leq & |\bar{W}_0 D\mathcal{F}^i \bar{W}_i^{-1}|_{r_{i+1}, s_{i+1},h_{i+1}}|\bar{W}_i(\mathcal{F}_i - \mathrm{Id})|_{r_{i+1}, s_{i+1},h_{i+1}},
\end{eqnarray*}
and this estimate, together with~\eqref{bla1} and~\eqref{bounded}, implies
\[ |\bar{W}_0(\mathcal{F}^{i+1}-\mathcal{F}^{i})|_{r_{i+1}, s_{i+1},h_{i+1}} \MP \varepsilon_i(r_i h_i)^{-1} \]
which is the required estimate.
\end{proof}

We can finally conclude the proof of Theorem~\ref{thm2}.

\begin{proof}[Proof of Theorem~\ref{thm2}]
Recall that we are given $\varepsilon,r,s,h$ and that we fixed $\eta$ small enough to define the sequences $\varepsilon_i,r_i,h_i$, and then we chose $Q_0 \geq 1$ satisfying~\eqref{eqQ} to define the sequences $\Delta_i,Q_i,\sigma_i$ and $s_i$. Moreover, we have 
\begin{equation}\label{limits}
\lim_{i \rightarrow +\infty}\varepsilon_i=\lim_{i \rightarrow +\infty}r_i=\lim_{i \rightarrow +\infty}h_i=0, \quad \lim_{i \rightarrow +\infty}s_i\geq s/2. 
\end{equation}
Now the condition~\eqref{cond0} implies the condition~\eqref{cond2} and Proposition~\ref{kamiter} can be applied: for each $i\in \N$, there exists a real-analytic transformation 
\[ \mathcal{F}^i : D_{r_i,s_i} \times O_{h_i} \rightarrow D_{r, s} \times O_{h}, \]
such that $H \circ \mathcal{F}^i=N^i + P^i$ with $N^i(I,\omega)=e^i(\omega)+\omega\cdot I$ and $|P_i|_{r_i, s_i,h_i} \leq \varepsilon_i$. Moreover, we have the estimates
\begin{equation}\label{teles}
|\bar{W}_0(\mathcal{F}^{i+1}-\mathcal{F}^{i})|_{r_{i+1}, s_{i+1},h_{i+1}} \MP \varepsilon_i(r_i h_i)^{-1}. 
\end{equation}
As $\varepsilon_i(r_i h_i)^{-1}$ decreases geometrically, these estimates and~\eqref{limits} show that the transformations $\mathcal{F}^i=(\Phi^i,\varphi^i)$ converge uniformly, as $i$ goes to infinity, to a map 
\[ \mathcal{F}=(\Phi_{\omega_0},\varphi) : \{0\} \times \T^n_{s/2} \times \{\omega_0\}= D_{0,s/2} \times O_0 \rightarrow D_{r,s} \times O_h  \]
which consists of a real map $\varphi : \{\omega_0\}=O_0 \rightarrow O_h$ and a real-analytic embedding
\[ \Phi_{\omega_0} : \T^n_{s/2} \rightarrow D_{r,s}\]
where, for simplicity, we identified $D_{0,s/2}=\{0\} \times \T^n_{s/2}$ with $\T^n_{s/2}$. Note that by reality, $\varphi(\omega_0)=\tilde{\omega} \in \R^n$ and $\Phi_{\omega_0}(\T^n) \subseteq B \times \T^n$. Moreover, from the estimate~\eqref{teles} and a usual telescopic argument,
\[ |\bar{W}_0 (\mathcal{F}-\mathrm{Id})|_{s/2} \MP \varepsilon(r h)^{-1} \]
from which one deduces that
\[ |W (\Phi_{\omega_0}-\Phi_0)|_{s/2} \MP \varepsilon(r h)^{-1}, \quad |\varphi(\omega_0)-\omega_0| \MP \varepsilon r^{-1}, \]
where $W=\mathrm{Diag}(r^{-1}\mathrm{Id},Q_0^{-1}\mathrm{Id})$, since $r_0=r$ and $\sigma_0 = C Q_0^{-1}$.

To conclude, fix $i \in \N$ and $\omega \in O_{h_i}$. Then, denoting $X_{H_\omega}$, $X_{N^i_\omega}$ and $X_{P^i_\omega}$ the Hamiltonian vector fields associated to $H_\omega$, $N^i_\omega$ and $P^i_\omega$, we have
\begin{eqnarray}\label{estfin} \nonumber
X_{H_{\varphi_i(\omega)}} \circ \Phi^i_\omega - D\Phi^i_\omega X_{N^i_\omega} & = & D\Phi^i_\omega \left( (\Phi^i_\omega)^* X_{H_{\varphi_i(\omega)}}-X_{N^i_\omega} \right) \\ \nonumber
& = & D\Phi^i_\omega \left( X_{H_{\varphi_i(\omega)} \circ \Phi^i_\omega}-X_{N^i_\omega} \right) \\
& = & D\Phi^i_\omega X_{P^i_\omega}
\end{eqnarray}
where we used the fact that $\Phi^i_\omega$ is symplectic in the second equality, and the relation 
\[ H_{\varphi_i(\omega)} \circ \Phi^i_\omega - N^i_\omega = P^i_\omega \] 
in the last equality. Using the inequality $|P_i|_{r_i,s_i,h_i} \leq \varepsilon_i$ together with Cauchy's estimate, one obtains that, as $i$ goes to infinity, $X_{P^i_\omega}$ converges to zero uniformly on $D_{0,s/2}$, whereas $D\Phi^i_\omega$ is uniformly bounded by the estimate~\eqref{bounded}. Hence the right-hand side of~\eqref{estfin} converges uniformly to zero, and so does the left-hand side: at the limit we obtain the equality
\[ X_{H_{\tilde{\omega}}} \circ \Phi_{\omega_0} =D\Phi_{\omega_0}.X_{\omega_0} \]
since $X_{N^i_{\omega}}$ converges to the constant vector field $X_{\omega_0}=\omega_0$ on $\T^n$. From this equality it follows that the embedded torus $\Phi_{\omega_0}(\T^n)$ is invariant by the Hamiltonian flow of $H_{\tilde{\omega}}$ and quasi-periodic with frequency $\omega_0$, and this ends the proof.
\end{proof}

\appendix

\section{Technical lemmas}

In this appendix, we state two technical lemmas that were used in the proof of Propostion~\ref{kamstep}. The first one deals with the estimate of the remainder of the Taylor's expansion at order one of an analytic function.

\begin{lemma}\label{tech1}
Let $P$ be an analytic function defined on $D_{r,s}$ and
\[ \bar{P}(I,\theta)=P(0,\theta)+\partial_I P(0,\theta)\cdot I. \]
Then, for any $0<c<1$, we have the estimate
\[ |P-\bar{P}|_{cr,s} \leq c^2(1-c)^{-1}|P|_{r,s}. \]
\end{lemma}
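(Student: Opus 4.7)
The plan is to reduce the multi-variable statement to a one-variable Cauchy estimate by slicing along the direction $I$. Fix a point $(I_0,\theta)\in D_{cr,s}$. Since $|I_0|<cr<r$, the function
\[ g(z) = P(zI_0,\theta) \]
is holomorphic on the disc $\{z\in\C : |z|<r/|I_0|\}$, and its $g(z)$-values are bounded there by $|P|_{r,s}$. The affine truncation $\bar P$ is designed precisely so that
\[ \bar P(I_0,\theta) = g(0) + g'(0), \qquad P(I_0,\theta)-\bar P(I_0,\theta) = g(1)-g(0)-g'(0). \]

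Next I would expand $g$ as a Taylor series at $0$: write $g(z)=\sum_{k\ge 0} g_k z^k$. The Cauchy estimates applied on circles of radius up to (just below) $r/|I_0|$ give
\[ |g_k| \le |P|_{r,s} \,(|I_0|/r)^k. \]
Evaluating at $z=1$ and subtracting the two lowest-order terms yields
\[ |P(I_0,\theta)-\bar P(I_0,\theta)| = \Bigl|\sum_{k\ge 2} g_k\Bigr| \le |P|_{r,s} \sum_{k\ge 2}(|I_0|/r)^k = |P|_{r,s}\,\frac{(|I_0|/r)^{2}}{1-|I_0|/r}. \]
Since $|I_0|/r<c<1$ and the function $t\mapsto t^2/(1-t)$ is increasing on $[0,1)$, the right-hand side is bounded by $c^2(1-c)^{-1}|P|_{r,s}$. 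Taking the supremum over $(I_0,\theta)\in D_{cr,s}$ gives the claimed inequality.

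There is no serious obstacle: the only point requiring a little care is to apply the Cauchy estimates on discs of radius strictly less than $r/|I_0|$ (the open domain $D_{r,s}$ is used), which presents no difficulty after passing to the supremum. Everything else is the classical one-variable geometric-series bound on the Taylor remainder of order two.
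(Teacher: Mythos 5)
Your proof is correct. The paper itself does not prove Lemma~\ref{tech1} but instead cites \cite{Alb07}, Lemma~A.5 (which establishes a more general statement), so there is no in-text argument to compare against; what you wrote is the standard and natural argument: slice along a complex line $z\mapsto zI_0$, apply one-variable Cauchy estimates to bound the Taylor coefficients of $g$, then sum the geometric tail $\sum_{k\ge 2}(|I_0|/r)^k$. The key identities $g(0)+g'(0)=\bar P(I_0,\theta)$ and $g(1)=P(I_0,\theta)$ are correctly used, the convergence of the Taylor series at $z=1$ is ensured because $1<1/c<r/|I_0|$, and passing to the supremum over radii $R<r/|I_0|$ in the Cauchy estimate is handled appropriately. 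The monotonicity of $t\mapsto t^2/(1-t)$ on $[0,1)$ then yields exactly the constant $c^2(1-c)^{-1}$.
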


For a proof (of a more general statement), we refer to \cite{Alb07}, Lemma A.5.

Then we need a  quantitative version of the implicit function theorem for a real-analytic map.

\begin{lemma}\label{tech2}
Let $f : O_h \rightarrow \C^n$ be a real-analytic map satisfying $|f-\mathrm{Id}|_h \leq \delta \leq h/4$. Then $f$ has a real-analytic inverse $\varphi : O_{h/4} \rightarrow O_h$ which satisfies
\[ |\varphi-\mathrm{Id}|_{h/4} \leq \delta, \quad h/4|D\varphi-\mathrm{Id}|_{h/4} \leq \delta.  \]
\end{lemma}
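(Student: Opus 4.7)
The plan is to set $g = f - \mathrm{Id}$, so by hypothesis $|g|_h \leq \delta \leq h/4$, and to construct $\varphi$ as the unique fixed point of the operator
\[ T(\psi) = \mathrm{Id} - g \circ \psi \]
acting on real-analytic maps $\psi : O_{h/4} \to \C^n$. Indeed, the equation $f \circ \varphi = \mathrm{Id}$ is equivalent to $\varphi = \mathrm{Id} - g \circ \varphi$, so a fixed point of $T$ is exactly the sought-after inverse.

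First I would verify that $T$ preserves the closed ball $\mathcal{B} = \{\psi : |\psi - \mathrm{Id}|_{h/4} \leq \delta\}$. For $\omega \in O_{h/4}$ and $\psi \in \mathcal{B}$, one has $\psi(\omega) \in O_{h/4 + \delta} \subseteq O_{h/2} \subseteq O_h$, so $g \circ \psi$ is well defined on $O_{h/4}$ and $|T(\psi) - \mathrm{Id}|_{h/4} = |g \circ \psi|_{h/4} \leq |g|_h \leq \delta$. Next I would show $T$ is a contraction on $\mathcal{B}$: by Cauchy's estimate applied to $g$ on $O_h$, one has $|Dg|_{h/2} \leq |g|_h/(h/2) \leq 2\delta/h \leq 1/2$, so the mean value theorem (applied along the segment from $\psi_1(\omega)$ to $\psi_2(\omega)$, which lies in $O_{h/2}$) gives
\[ |T(\psi_1) - T(\psi_2)|_{h/4} = |g \circ \psi_1 - g \circ \psi_2|_{h/4} \leq |Dg|_{h/2} \, |\psi_1 - \psi_2|_{h/4} \leq \tfrac{1}{2}|\psi_1 - \psi_2|_{h/4}. \]
The Banach fixed point theorem in the complete metric space $\mathcal{B}$ (endowed with the sup norm) then yields a unique $\varphi \in \mathcal{B}$, which is automatically real-analytic as a uniform limit of the real-analytic iterates $T^k(\mathrm{Id})$. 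This already gives the first estimate $|\varphi - \mathrm{Id}|_{h/4} \leq \delta$.

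For the derivative estimate, I would differentiate the identity $f \circ \varphi = \mathrm{Id}$, giving $(Df \circ \varphi) \cdot D\varphi = \mathrm{Id}$, so $D\varphi = (Df \circ \varphi)^{-1}$. Writing $Df = \mathrm{Id} + Dg$ and setting $A = Dg \circ \varphi$, Cauchy's estimate and $\varphi(O_{h/4}) \subseteq O_{h/2}$ give $|A|_{h/4} \leq |Dg|_{h/2} \leq 2\delta/h \leq 1/2$. Since $\|A\| \leq 1/2$, the Neumann series converges and $(\mathrm{Id} + A)^{-1} - \mathrm{Id} = -A(\mathrm{Id} + A)^{-1}$, yielding
\[ |D\varphi - \mathrm{Id}|_{h/4} \leq \frac{|A|_{h/4}}{1 - |A|_{h/4}} \leq 2 \cdot \frac{2\delta}{h} = \frac{4\delta}{h}, \]
which is exactly the required bound $(h/4)|D\varphi - \mathrm{Id}|_{h/4} \leq \delta$.

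The whole argument is essentially routine; the only mildly delicate point is bookkeeping the domain losses so that $\varphi(O_{h/4}) \subseteq O_{h/2}$, which is where the derivative of $g$ can be controlled by Cauchy. The factor $1/4$ in the shrinkage of $h$ is chosen precisely to give simultaneously: (i) room ($\delta \leq h/4$) for $\psi$-iterates to stay inside $O_{h/2}$; (ii) a Cauchy bound $|Dg|_{h/2} \leq 1/2$ that makes $T$ contractive and the Neumann series convergent; and (iii) the clean $\delta$-estimates for both $\varphi - \mathrm{Id}$ and $(h/4)(D\varphi - \mathrm{Id})$.
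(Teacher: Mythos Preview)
Your contraction-mapping argument is correct and is precisely the standard proof of this quantitative inverse function theorem; the paper itself does not prove the lemma but simply refers to \cite{Pos01}, Lemma~A.3, where the same argument appears. One small remark: for the Cauchy bound $|Dg|_{h/2}\le 2|g|_h/h$ to hold for the \emph{operator} sup-norm (which is what you need in the mean-value step), apply the one-variable Cauchy estimate to $t\mapsto g(z+tv)$ for each $|v|_\infty\le 1$ rather than componentwise, so that no factor of $n$ appears.
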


For a proof, we refer to \cite{Pos01}, Lemma A.3.

\addcontentsline{toc}{section}{References}
\bibliographystyle{amsalpha}
\bibliography{InvariantTori}

\end{document}